\newtheorem{theorem}{Theorem}[section]
\newtheorem{corollary}[theorem]{Corollary}
\newtheorem{lemma}[theorem]{Lemma}
\theoremstyle{definition}
\theoremstyle{remark}
\begin{document}

\title{Unknottability of spatial graphs by region crossing changes}

\author{
Yukari Funakoshi \thanks{Faculty of Education, Gifu Shotoku Gakuen University, 1-1 Takakuwa-nishi, Yanaizu-shi, Gifu, 501-6194, Japan. Email: yukarifunakoshi@gifu.shotoku.ac.jp}
\and 
Kenta Noguchi \thanks{Department of Information Sciences, Tokyo University of Science, 2641 Yamazaki, Noda, Chiba, 268-8510, Japan. Email: noguchi@rs.tus.ac.jp}
\and 
Ayaka Shimizu \thanks{Department of General Education, National Institute of Technology (KOSEN), Gunma College, 580 Toriba, Maebashi-shi, Gunma, 371-8530, Japan. Email: shimizu@gunma-ct.ac.jp}
}
\date{\today}

\maketitle

\begin{abstract}
A region crossing change is a local transformation on spatial graph diagrams switching the over/under relations at all the crossings on the boundary of a region. 
In this paper, we show that a spatial graph of a planar graph is unknottable by region crossing changes if and only if the spatial graph is non-Eulerian or is Eulerian and proper. 
\end{abstract}

\section{Introduction}

A {\it knot} is an embedding of a circle to a three-sphere. 
A {\it link of $r$ components} is an embedding of $r$ circles to a three-sphere. 
A {\it spatial graph of $r$ components} is an embedding of $r$ connected graphs to a three-sphere. 
By regarding a circle to be a graph without vertices, we assume that knots and links belong to spatial graphs. 
Each spatial graph $S$ is represented by a {\it diagram} on a two-sphere, a projection of $S$ to a two-sphere with over/under information at each intersection, where each intersection is a double point of edges and called a {\it crossing}. 
It is well-known that two diagrams represent the same spatial graph if and only if one of them can be transformed into the other by a finite number of the Reidemeister moves RI to RV shown in Figure \ref{reidemeister} (\cite{kauffman}). 
\begin{figure}[ht]
\begin{center}
\includegraphics[width=90mm]{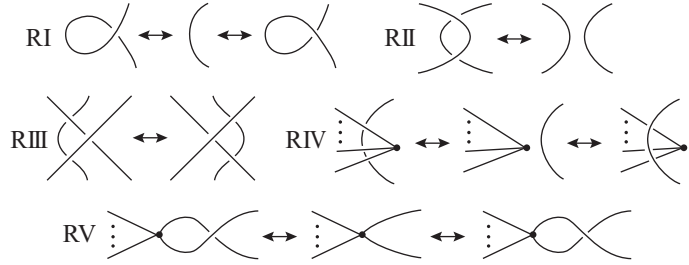}
\caption{Reidemeister moves. }
\label{reidemeister}
\end{center}
\end{figure}
A {\it self-crossing} (resp. {\it non-self-crossing}) on a diagram is a crossing between edges of the same (resp. different) component. 
A {\it planar graph} is a graph which can be embedded to a two-sphere without creating crossings. 
A spatial graph $S$ of a planar graph is {\it unknotted} if $S$ has a diagram which has no crossings. 
A diagram $D$ of a spatial graph is {\it unknotted} if $D$ represents an unknotted spatial graph. 
A spatial graph $S$ is {\it completely splitted} if $S$ has a diagram which has no non-self-crossings. 
A graph $G$ is {\it Eulerian} if the degree of every vertex of $G$ is even. 
A spatial graph $S$ is {\it Eulerian} if $S$ is an embedding of an Eulerian graph. 
We assume that knots and links are Eulerian. 

Studies of local transformations have a key role in knot theory and spatial graph theory to measure a complexity of a spatial graph or to consider the relations or classifications of spatial graphs. 
For example, a {\it Delta move} is a local transformation on spatial graphs shown in Figure \ref{Delta}. 
It is shown in \cite{MN} that a Delta move is an unknotting operation for knots, i.e., we can unknot any knot by applying a finite number of Delta moves and Reidemeister moves on its diagram. 
On the other hand, a Delta move is not an unknotting operation for links and spatial graphs. 
Then the equivalent classes of links and spatial graphs on Delta moves are studied using and applying to other invariants, such as the Conway polynomial and the Wu invariant (\cite{MT, nakanishi, NO, okada, taniyama}). 
\begin{figure}[ht]
\begin{center}
\includegraphics[width=40mm]{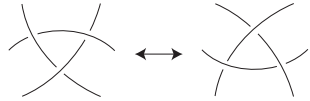}
\caption{A Delta move. }
\label{Delta}
\end{center}
\end{figure}

A {\it $\Delta _{13}$-move} is a local transformation on spatial graphs shown in Fig. \ref{d13} \cite{nakanishi2}. 
It is shown in \cite{nakanishi2} that a $\Delta _{13}$-move is an unknotting operation for knots, and is not for links. 
For spatial graphs, it is shown in \cite{ohyama} that a spatial graph $S$ of a planar graph can be unknotted by $\Delta _{13}$-moves if and only if $S$ is non-Eulerian or is Eulerian and proper. 
The definition of the properness is given in Section \ref{section-Eulerian}. 
\begin{figure}[ht]
\begin{center}
\includegraphics[width=40mm]{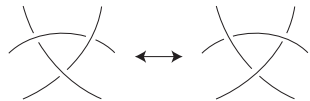}
\caption{A $\Delta _{13}$-move. }
\label{d13}
\end{center}
\end{figure}

A {\it region crossing change} is a local transformation on spatial graph diagrams which changes the over/under information at all the crossings on the boundary of a region. 
The following theorem is shown for knot diagrams
\footnote{
An alternative proof of Theorem \ref{shimizu-thm} is given in \cite{DR} using graph theory. 
}. 

\begin{theorem}[\cite{shimizu-rcc}]
Any diagram of a knot can be unknotted by region crossing changes. 
\label{shimizu-thm}
\end{theorem}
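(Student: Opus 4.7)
My plan is to reduce to the classical fact that ordinary crossing changes form an unknotting operation for knots. Given this, it suffices to show that for any knot diagram $D$ and any crossing $c$ of $D$, the switch at $c$ alone can be realized as the net effect of some collection of region crossing changes; concatenating such realizations along an unknotting sequence of ordinary crossing changes then unknots $D$ via region crossing changes. I phrase this linearly over $\mathbb{F}_{2}$: with $C(D)$ and $R(D)$ denoting the crossings and regions of $D$, define the incidence map
\[
\phi:\mathbb{F}_{2}^{R(D)}\longrightarrow\mathbb{F}_{2}^{C(D)},\qquad r\longmapsto\sum_{c\in\partial r}c,
\]
so that region crossing changes at a set $S\subseteq R(D)$ switch precisely the crossings of $\phi(\mathbf{1}_{S})$. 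The theorem is then equivalent to surjectivity of $\phi$.

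A dimension count reorganizes the goal: viewing $D$ as a connected $4$-regular planar graph on $S^{2}$ with $|C(D)|=n$, Euler's formula gives $|R(D)|=n+2$, so surjectivity of $\phi$ is equivalent to $\dim\ker\phi=2$. Two independent kernel elements are immediate: the all-regions vector (each crossing borders $4\equiv 0\pmod{2}$ regions) and the indicator of the black regions from a checkerboard coloring of $D$ (which exists because the shadow of $D$ is $4$-regular, hence $2$-face-colorable), since every crossing borders exactly two black regions.

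The main obstacle is the reverse inequality $\dim\ker\phi\le 2$, and this is where the one-component hypothesis is essential. One clean way to approach this is to factor $\phi$ through the Tait graphs $G_{B}$ and $G_{W}$ of $D$: their vertex sets are the black and white regions, and their edges are in bijection with the crossings. An element of $\ker\phi$ then corresponds to a pair $(y,z)\in\mathbb{F}_{2}^{V(G_{B})}\oplus\mathbb{F}_{2}^{V(G_{W})}$ whose coboundaries $\delta y$ and $\delta z$ agree in $\mathbb{F}_{2}^{C(D)}$, and planar duality identifies the cut space of $G_{B}$ with the cycle space of $G_{W}$ inside $\mathbb{F}_{2}^{C(D)}$. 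Surjectivity of $\phi$ thus becomes the statement that the cycle and cut spaces of $G_{W}$ together span $\mathbb{F}_{2}^{C(D)}$. The hypothesis that $D$ is traced out by a single closed strand is the combinatorial input that forces this span; for multi-component link diagrams the analogous span can be proper, consistent with the known failure of the corresponding unknotting statement for general links.
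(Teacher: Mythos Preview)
The paper does not prove this theorem: it is quoted from \cite{shimizu-rcc}, and the footnote records an alternative graph-theoretic proof in \cite{DR}. So there is no ``paper's own proof'' to compare against here. Your linear-algebraic setup over $\mathbb{F}_2$ is the \cite{DR} style of argument rather than the original one in \cite{shimizu-rcc}, which is constructive: given a crossing $c$, one smooths $c$ to split the knot projection into two closed curves and uses the regions enclosed by one of them (in a checkerboard sense) as the region set realizing the single crossing change at $c$. That approach never needs the kernel computation at all.

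That said, your outline has a genuine gap at exactly the point you flag as ``the main obstacle''. You correctly reduce surjectivity of $\phi$ to the statement that the cycle space and the cut space of $G_W$ together span $\mathbb{F}_2^{C(D)}$, which (since their dimensions always sum to $|C(D)|$) is equivalent to the $\mathbb{F}_2$-\emph{bicycle space} of $G_W$ being trivial. You then simply assert that the one-strand hypothesis ``is the combinatorial input that forces this span''. But this is the entire content of the theorem in your formulation, and it is not obvious: what is needed is the (nontrivial) fact that for a connected plane graph $G$, the $\mathbb{F}_2$-bicycle space has dimension equal to one less than the number of link components of the medial diagram of $G$. Since $D$ is the medial diagram of $G_W$, this gives $\dim(\text{bicycle space of }G_W)=0$ precisely when $D$ is a knot. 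Until you prove or cite that identity, the argument is a reformulation rather than a proof. Everything prior---the Euler count $|R(D)|=n+2$, the two explicit kernel elements, and the identification of $\mathrm{im}\,\phi$ with $\mathrm{cut}(G_B)+\mathrm{cut}(G_W)=\mathrm{cycle}(G_W)+\mathrm{cut}(G_W)$ via planar duality---is correct.
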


\noindent Note that it had already been shown in \cite{aida} that any knot has a diagram which can be transformed into an unknotted diagram by a single ``$n$-gon move'',  a kind of region crossing changes. 
The point of Theorem \ref{shimizu-thm} is that we can unknot any fixed diagram of a knot by region crossing changes without applying Reidemeister moves. 
For links, the following is shown. 

\begin{theorem}[\cite{cheng}]
Any diagram of a link $L$ can be unknotted by region crossing changes if and only if $L$ is proper. 
\label{cheng-thm}
\end{theorem}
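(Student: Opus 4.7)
The plan is to prove the biconditional in the two natural directions, using a parity argument on linking numbers for necessity and a reduction to Theorem \ref{shimizu-thm} for sufficiency.

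For necessity, I would show that a single region crossing change preserves the value of $\mathrm{lk}(K_i, L\setminus K_i)\pmod 2$ for each component $K_i$. Fix a region $R$ and traverse $\partial R$ once, recording the cyclic sequence of component-labels of the arc-segments that bound $R$. At every crossing on $\partial R$ two consecutive arc-segments meet, and a non-self-crossing on $\partial R$ involving $K_i$ is precisely a place where this cyclic sequence transitions between a $K_i$-label and a non-$K_i$-label. A cyclic sequence transitions into and out of any fixed symbol an even number of times, so the RCC at $R$ flips an even number of non-self-crossings, each of which changes $\mathrm{lk}(K_i, L\setminus K_i)$ by $\pm 1$; the parity is therefore preserved. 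Because the unlink is proper, any link admitting an RCC-unknotting of some diagram must itself be proper.

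For sufficiency, assume $L$ is proper and let $D$ be a diagram of $L$ with $r\ge 2$ components. I would work in the $\mathbb{F}_2$-linear framework in which the RCC operation is encoded by the incidence map $\phi : \mathbb{F}_2^{R(D)} \to \mathbb{F}_2^{C(D)}$ sending a subset of regions to the indicator of the crossings on its combined boundary. Necessity shows $\mathrm{Image}(\phi)$ is contained in the subspace $W$ cut out by the parity equations $\sum_{j\ne i}\mathrm{lk}(K_i,K_j)\equiv 0\pmod 2$, and the task is to prove the reverse inclusion. For each component $K_i$ I would construct a \emph{component unknotter}: a set of regions of $D$ whose $\phi$-image toggles only self-crossings of $K_i$. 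Granted this, Theorem \ref{shimizu-thm} applied to the sub-diagram of $K_i$ produces an RCC sequence that unknots $K_i$ without disturbing the other components' crossings, after which properness of $L$ provides enough remaining freedom to zero out the pairwise linking numbers, leaving a diagram of the unlink.

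The principal obstacle is the construction of the component unknotters. A region of the sub-diagram obtained by erasing all components other than $K_i$ is, in general, a union of several regions of $D$, so an RCC along such a sub-diagram region lifts to an operation on $D$ whose crossing-boundary contains both the targeted self-crossings of $K_i$ and extra non-self-crossings of $K_i$ with the erased components. Arranging for the unwanted toggles to cancel out requires a careful application of the parity lemma above together with a dimension count showing that $\dim \mathrm{Image}(\phi)=\dim W$. This linear-algebraic verification, which is essentially the core of Cheng's argument, is where the real work of the proof resides.
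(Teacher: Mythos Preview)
The paper does not supply a proof of this theorem; it is quoted from \cite{cheng} and used as a black box (see the proof of Lemma~\ref{lemma-four}, where (ii)$\Leftrightarrow$(iii) is justified simply ``By Theorem~\ref{cheng-thm}''). So there is nothing in the paper to compare your proposal against beyond the auxiliary Lemmas~\ref{rcc-proper}, \ref{cycle-cc} and~\ref{pair-cc}, which are themselves cited from Cheng.

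Your necessity argument is correct and is precisely the content of Lemma~\ref{rcc-proper}. For sufficiency your outline has the right shape, but there is one genuine slip and one gap you yourself flag. The slip is the clause ``zero out the pairwise linking numbers, leaving a diagram of the unlink'': a link of individually unknotted components with all pairwise linking numbers zero need not be the unlink (the Whitehead link is the standard counterexample). What is actually required is to make the diagram \emph{descending}, i.e.\ force $w(D_i,D_j)=0$ for all $i<j$; this does give a split union of unknots. That step is exactly what Lemma~\ref{pair-cc} provides once one knows each $w(D_i,D_j)$ is even, which follows from properness via the relation $w(D_i,D_j)\equiv lk(K_i,K_j)\pmod 2$ (Lemma~\ref{ld-lk}). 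The acknowledged gap is the construction of your ``component unknotters'' and the dimension identity $\dim\mathrm{Image}(\phi)=\dim W$: you correctly locate this as ``the core of Cheng's argument'' but do not carry it out, so what you have is a plan rather than a proof. Cheng's actual route, as reflected in Lemmas~\ref{cycle-cc} and~\ref{pair-cc}, is more constructive than a bare dimension count: one exhibits explicit region sets realizing any single self-crossing change and any \emph{pair} of non-self-crossing changes between two fixed components, and these together generate all of $W$.
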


\noindent The point of Theorem \ref{cheng-thm} is that the unknottability of link diagrams by region crossing changes depends only on the properness of a link itself. 
The following theorem is shown for spatial graphs of a connected planar graph. 

\begin{theorem}[\cite{HSS}]
Any diagram of a spatial graph of a connected planar graph can be unknotted by region crossing changes. 
\label{HSS-thm}
\end{theorem}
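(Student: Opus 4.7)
The plan is to reduce the problem to Theorem \ref{shimizu-thm} by exploiting the connectedness of the underlying graph $G$. I would first construct an ``unknotted target'' diagram $D_{0}$ with the same projection as the given $D$, using a descending-style argument: pick a spanning tree of $G$, root it and traverse it via depth-first search to produce an order on the edges, then at each crossing of the projection declare the strand encountered first in the induced traversal order to be the overstrand. Such a $D_{0}$ represents the standard planar embedding of $G$, i.e.\ the unknotted spatial embedding. It then suffices to show that the set $T \subseteq C(D)$ of crossings where $D$ and $D_{0}$ disagree can be realized by a region crossing change on $D$.

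The main step will be to prove the following key lemma: in any diagram $D$ of a spatial graph of a connected planar graph, every single crossing $c$ can be realized as a region crossing change. Applied to each element of $T$ and summed in $\mathbb{F}_{2}$, the lemma yields a region crossing change realizing $T$. To prove the key lemma, I would split into cases on the underlying graph $G$. If $G$ has a vertex $v$ of odd degree, then the checkerboard-type parity obstruction that constrains region crossing changes for links (and underlies the properness condition in Theorem \ref{cheng-thm}) fails near $v$; by combining the angular regions meeting $v$ with the global regions and adapting the argument underlying Theorem \ref{shimizu-thm}, I would show that every single crossing is realizable. If $G$ is Eulerian, I would use connectedness to fix an Eulerian circuit $\gamma$ of $G$, resolve each vertex of $G$ according to $\gamma$ to obtain an auxiliary knot diagram $\widetilde{D}$ with the same crossing set as $D$, apply Theorem \ref{shimizu-thm} to conclude that every single crossing is realizable in $\widetilde{D}$, and transfer the realizations back to $D$ via the correspondence between regions induced by the resolution.

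The main obstacle is the Eulerian case, specifically the compatibility of region spaces under vertex resolution: a single region of $\widetilde{D}$ may correspond to a union of several regions of $D$, because resolving a vertex can merge regions that locally meet at it. I would therefore need to show that every region of $\widetilde{D}$ lifts to an $\mathbb{F}_{2}$-combination of regions of $D$, so that single-crossing realizations in $\widetilde{D}$ transfer to single-crossing realizations in $D$. A secondary subtlety is verifying that the descending-diagram construction of $D_{0}$ genuinely represents the unknotted spatial embedding of $G$ when $G$ has several independent cycles; a careful induction on the cycle rank of $G$, adding non-tree edges one at a time in the spanning-tree order, should handle this and complete the proof.
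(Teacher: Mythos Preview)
The paper does not itself prove Theorem~\ref{HSS-thm}; it is quoted from \cite{HSS}. The only part reproved here is the connected \emph{Eulerian} subcase, Lemma~\ref{conn-pl-E}, and there the paper does exactly what you propose: split vertices (keeping connectivity, via the lemma on $G_{12}$ versus $G_{23}$) until the diagram becomes a knot diagram, invoke Theorem~\ref{shimizu-thm} to realize the desired crossing changes by region crossing changes on the knot diagram, and pull the region set back to $D$. On the Eulerian branch your plan and the paper's argument coincide.

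Your ``main obstacle'' is in fact harmless. When $v$ is split by peeling off two cyclically adjacent edges (and the connectivity lemma lets one always choose such a pair), no crossings are created and the only effect on regions is that two regions of $D$ meeting at $v$ merge into a single region of $\widetilde D$. Hence every region of $\widetilde D$ is a disjoint union of regions of $D$, and an $\mathbb F_2$-combination of regions realizing a crossing change in $\widetilde D$ lifts verbatim to one in $D$. This is why the paper's one-line ``apply region crossing changes to $D$ at the corresponding regions'' suffices; no further compatibility argument is needed.

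The genuine soft spot is your non-Eulerian branch. Saying that the checkerboard parity obstruction ``fails near an odd-degree vertex $v$'' and that one can ``combine the angular regions meeting $v$ with the global regions'' is not yet an argument that every single crossing of the \emph{given} diagram $D$ is realizable by region crossing changes. The paper's own handling of odd-degree vertices (Lemmas~\ref{lemma12} and~\ref{lem-even-vertex}) only reaches the analogous conclusion \emph{after} modifying the diagram by the $cPv$ stretch, which is precisely what Theorem~\ref{HSS-thm} forbids. If you want a self-contained proof along your lines, this is the step that actually needs to be written out; the descending construction of $D_0$ is, by contrast, routine and your cycle-rank induction sketch is adequate.
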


\noindent Theorem \ref{shimizu-thm} implies that a region crossing change is an unknotting operation for knot diagrams and Theorem \ref{cheng-thm} implies that it is not an unknotting operation for link diagrams. 
Again, the point of Theorems \ref{shimizu-thm}, \ref{cheng-thm} and \ref{HSS-thm} is that it does not depend on the choice of a diagram. 
In general, the unknottability by region crossing changes depends on the choice of a diagram of the spatial graph as pointed out in \cite{ST}. 
We define that a spatial graph $S$ is {\it unknottable (resp. completely splittable) by region crossing changes} if $S$ has a diagram which can be unknotted (resp. completely splitted) by region crossing changes, where applying Reidemeister moves is not allowed during region crossing changes. 
Note that any spatial graph of a planar graph is unknottable by (the classical) crossing changes. 
In this paper, we show the following theorems as a generalization of Theorems \ref{shimizu-thm}, \ref{cheng-thm} and \ref{HSS-thm}. 

\begin{theorem}
A spatial graph $S$ of a planar graph is unknottable by region crossing changes if and only if $S$ is non-Eulerian or is Eulerian and proper. 
\label{thm-unknottable}
\end{theorem}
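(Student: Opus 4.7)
The proof splits into the two directions of the biconditional, with the necessity extending the obstruction of Cheng~\cite{cheng} and the sufficiency split into an Eulerian-proper case and a non-Eulerian case.

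For necessity, assume $S$ is Eulerian and not proper. For any diagram $D$ and any pair of distinct components $C_i, C_j$ (both Eulerian), I would define $\overline{\mathrm{lk}}(C_i, C_j) \in \mathbb{F}_2$ from the signed count of non-self-crossings between $C_i$ and $C_j$ in $D$, mimicking the mod-$2$ linking number for links; the Eulerian hypothesis forces this count to be even, so the halved count mod $2$ is well defined. The plan is then to verify that (i) $\overline{\mathrm{lk}}$ is a Reidemeister invariant, hence depends only on $S$, and (ii) $\overline{\mathrm{lk}}$ is preserved by a region crossing change. Step (ii) reduces to showing that for each region $R$ the number of non-self $(C_i,C_j)$-crossings on $\partial R$ is even, which is the spatial-graph analog of the key lemma in~\cite{cheng}; the argument is a traversal of $\partial R$ that uses the Euler-circuit structure of $C_i$ and $C_j$ to pair up the $(i,j)$-switches. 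Since the unknotted diagram has $\overline{\mathrm{lk}}\equiv 0$ pairwise, non-properness is an obstruction.

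For sufficiency, consider first \emph{Case A}, where $S$ is Eulerian and proper. I would choose a diagram $D$ in which each component $C_i$ lies in a thin tubular neighborhood of a planar immersion of an Euler circuit of $C_i$, contained in a disk $U_i$, with non-self-crossings arranged as standard product linkings between pairs of disks. By Theorem~\ref{HSS-thm} applied inside each $U_i$, the self-crossings of $C_i$ can be removed by region crossing changes using only regions inside $U_i$, without disturbing non-self-crossings. The remaining non-self-crossings between each pair $(C_i, C_j)$ form a $2$-component link-like sub-diagram with $\overline{\mathrm{lk}}(C_i, C_j)=0$ by properness, so Theorem~\ref{cheng-thm} lets us trivialize them by further region crossing changes performed in a way that does not reintroduce self-crossings.

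For \emph{Case B}, $S$ has some component $C$ with a vertex $v$ of odd degree. Starting from a diagram in which the other components have been standardized as in Case~A, I would exhibit near $v$ an explicit sequence of region crossing changes realizing a single crossing change at any designated crossing incident to $C$; this is possible precisely because the odd parity at $v$ defeats the mod-$2$ linking invariant that powered the necessity proof. Combined with the component-wise unknottability supplied by Theorem~\ref{HSS-thm}, these ``free'' crossing changes bring $D$ to a diagram with no crossings. The main obstacle is this last step: verifying rigorously that the presence of the odd-degree vertex $v$ makes the $\mathbb{F}_2$-row span of the region-crossing incidence matrix equal to all of $\mathbb{F}_2^{c(D)}$, probably via a dimension count that shows the rank increase contributed by the regions incident to $v$ is exactly the codimension of the ``Eulerian-proper'' subspace used in Case~A.
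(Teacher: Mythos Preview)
Your necessity argument is fine in spirit and matches the paper's (Lemma~\ref{parity-lk-g}, Corollary~\ref{rcc-proper-g}). The sufficiency, however, has genuine gaps in both cases.

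In Case~A you write that ``$\overline{\mathrm{lk}}(C_i,C_j)=0$ by properness,'' but this is false: properness only says $\sum_{j\neq i}\mathrm{lk}(S_i,S_j)$ is even for each $i$, not that each pairwise linking number vanishes mod~$2$. A three-component Eulerian spatial graph with every pairwise $\mathrm{lk}\equiv 1\pmod 2$ is proper, yet no pair can be unlinked by region crossing changes in isolation. The paper avoids this by \emph{not} working pair-by-pair: it splits each component into a knot by vertex splittings to obtain a link diagram $D'$, applies Cheng's full Theorem~\ref{cheng-thm} to $D'$ (which handles the global properness condition via cycles of crossing changes, Lemma~\ref{cycle-cc}), and then pulls the region choices back to $D$ (Lemma~\ref{lemma-four}).

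In Case~B your plan is to stay on a fixed diagram and argue that the odd-degree vertex $v$ forces the region--crossing incidence matrix to have full rank over $\mathbb{F}_2$. You flag this as the main obstacle, and indeed it is not true in general for a fixed diagram; moreover your sketch only targets crossings incident to the non-Eulerian component $C$, leaving non-self-crossings between two \emph{Eulerian} components unaddressed. The paper's mechanism is quite different and does not fix the diagram: given any crossing $c$ and a path $P$ from $c$ to an odd-degree vertex $v$, it performs a Reidemeister-move deformation $cPv$ that drags an arc of $c$ along $P$ past $v$ (the ``spur''), and then exhibits an explicit checkerboard-like set of regions $R(cPv)$ inside the spur whose region crossing changes flip the spur to the other side, realizing the single crossing change at $c$ (Lemma~\ref{lem-even-vertex}). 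For crossings away from the non-Eulerian component, an arc between \emph{two} odd-degree vertices is first stretched out to meet the crossing (procedure~(d) in Lemma~\ref{lemma12}). The freedom to modify the diagram before applying region crossing changes is essential here, and is exactly what the definition of ``unknottable by region crossing changes'' permits.
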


\begin{theorem}
A spatial graph $S$ is completely splittable by region crossing changes if and only if $S$ is non-Eulerian or is Eulerian and proper.\footnote{
Any spatial graph of a non-Eulerian graph is completely splittable by region crossing changes. 
This means that the splitness by region crossing changes is {\it intrinsic} (see \cite{SGT}) to non-Eulerian graphs. 
On the other hand, since it depends on the way of embedding, splitness by region crossing changes is not intrinsic to Eulerian graphs.}
\label{thm-splittable}
\end{theorem}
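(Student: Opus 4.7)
The plan is to prove the two implications separately, with the ``if'' direction an immediate consequence of Theorem \ref{thm-unknottable} and the ``only if'' direction requiring a short argument based on a mod-$2$ linking invariant.

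For the ``if'' direction, observe that an unknotted spatial graph is \emph{a fortiori} completely split, since a diagram with no crossings at all has no non-self-crossings. Hence any sequence of region crossing changes that turns a diagram of $S$ into one representing an unknotted spatial graph also turns it into one representing a completely split spatial graph. Theorem \ref{thm-unknottable} supplies such a sequence whenever $S$ is non-Eulerian or is Eulerian and proper.

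For the ``only if'' direction I would argue by contrapositive: if $S$ is Eulerian and not proper, then no diagram of $S$ admits a sequence of region crossing changes producing a completely split spatial graph. For each component $S_i$ set $\lambda_i \in \mathbb{Z}/2$ to be the mod-$2$ reduction of $\sum_{j\neq i}\mathrm{lk}(S_i,S_j)$, using the linking numbers introduced in Section \ref{section-Eulerian}; properness is exactly the condition $\lambda_i = 0$ for every $i$, while a completely split spatial graph plainly satisfies this. Region crossing changes preserve the underlying abstract graph, so Eulerianness persists and the invariants $\lambda_i$ remain well-defined throughout. The task therefore reduces to showing that every single region crossing change preserves each $\lambda_i$ modulo $2$.

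The key step, which I expect to be the main technical point, is the following parity lemma: for any region $R$ and any component $S_i$, the number of non-self-crossings on $\partial R$ involving $S_i$ is even. I plan to prove this by viewing $\partial R$ as a closed walk in the shadow of the diagram and tracking whether the walk currently lies on an arc of $S_i$. At every graph vertex along $\partial R$ the walk passes between two edges incident to a single vertex of a single component, and at every self-crossing of $S_i$ along $\partial R$ it turns between two arcs of $S_i$, so in both cases the indicator ``walk is on $S_i$'' is unchanged. Only at a non-self-crossing on $\partial R$ involving $S_i$ does the walk switch between an arc of $S_i$ and an arc of another component, and since $\partial R$ is closed these in-and-out switches must pair up, giving an even count. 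A region crossing change at $R$ thus flips an even number of the crossings contributing to each $\lambda_i$, preserving $\lambda_i$ modulo $2$. The delicate point will be handling graph vertices of large even degree carefully in the walk argument and confirming that the linking numbers from Section \ref{section-Eulerian} are genuinely additive mod $2$ over individual crossings, as implicitly used above.
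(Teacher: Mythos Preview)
Your ``only if'' direction is sound and is essentially the content of Corollary~\ref{rcc-proper-g}; your walk-around-the-boundary argument for the parity lemma is a valid alternative to the paper's reduction (via Eulerian circuits) to the link case.

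The ``if'' direction, however, has a genuine gap. Theorem~\ref{thm-unknottable} applies only to spatial graphs of \emph{planar} graphs---indeed, ``unknotted'' is only defined in that setting---whereas Theorem~\ref{thm-splittable} is stated for arbitrary spatial graphs. For a spatial graph whose underlying abstract graph is non-planar (say two linked spatial copies of $K_5$), your reduction to Theorem~\ref{thm-unknottable} simply does not apply, yet Theorem~\ref{thm-splittable} must still cover this case.

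The paper avoids this by establishing complete splittability directly, without passing through unknottability. In the non-Eulerian case, Lemma~\ref{lemma12} already produces a diagram on which any prescribed set of crossing changes can be realized by region crossing changes, in particular those making the diagram descending with respect to a component order (Theorem~\ref{nE-splittable}). In the Eulerian proper case, Lemma~\ref{lemma-four} yields complete splittability via vertex splittings and the link result of Theorem~\ref{cheng-thm} (Theorem~\ref{E-splittable}). Neither argument requires planarity. To repair your proof you should invoke these ingredients directly rather than Theorem~\ref{thm-unknottable}.
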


\noindent The rest of the paper is organized as follows: 
In Section \ref{section-non-Eulerian}, we consider non-Eulerian spatial graphs. 
In Section \ref{section-Eulerian}, we consider Eulerian spatial graphs. 
In Section 4, we prove Theorems \ref{thm-unknottable} and \ref{thm-splittable}.

\section{Non-Eulerian spatial graphs}
\label{section-non-Eulerian}

In this section we consider non-Eulerian spatial graphs and show the following lemma:  

\begin{lemma}
Let $S$ be a non-Eulerian spatial graph. 
Let $D$ be a diagram of $S$, and let $D'$ be a diagram which is obtained from $D$ by some crossing changes. 
There exists a diagram $E$ of $S$ such that $E$ can be transformed into a diagram representing the same spatial graph to $D'$ by region crossing changes. 
\label{lemma12}
\end{lemma}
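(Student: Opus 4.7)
My plan is to reduce the lemma to the case of realizing a single crossing change and then handle that case by a local construction near an odd-degree vertex of $S$. Let $X \subseteq C(D)$ denote the set of crossings at which $D$ and $D'$ differ, and let $v$ be a vertex of $S$ whose degree is odd; such a $v$ exists because $S$ is non-Eulerian. I would induct on $|X|$. The base case $|X|=0$ is trivial with $E=D$ and the empty sequence of region crossing changes. For the inductive step, I would first treat a single crossing flip at some $c \in X$: produce a diagram $E_1$ of $S$, obtained from $D$ by Reidemeister moves, together with a sequence of region crossing changes on $E_1$ whose result is a diagram of the spatial graph $[D_c]$ obtained from $S$ by flipping only $c$. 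Since $D'$ differs from $D_c$ at $|X|-1$ crossings, the inductive hypothesis applied to that new diagram produces a further Reidemeister-then-region-crossing-change sequence. The two alternating sequences can be merged into a single Reidemeister stage $D \to E$ followed by a single region crossing change stage on $E$, because a region crossing change in a diagram $F'$ obtained from $F$ by a Reidemeister move corresponds to a region crossing change in $F$ inducing the same spatial graph operation. Thus only the $|X|=1$ case requires genuine work.

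\textbf{Single-flip construction.} For a single crossing $c$ to be flipped, I would build a ``gadget'' near $v$. Pick an edge $e$ incident to $v$ and drag it along an arc from $v$ to a small disk neighbourhood of $c$, using an RV move at $v$ and alternating RI, RII and RIII moves elsewhere. This drag creates auxiliary crossings along the arc, produced in cancelling pairs from each RII step. The idea is to arrange the drag so that these auxiliary crossings can be grouped, together with $c$ itself, on the boundary of a specific region $R$ of the resulting diagram $E$. A region crossing change at $R$ (combined if necessary with a companion region that balances the remaining boundary) then flips $c$ together with all the auxiliary crossings; since each RII-pair is flipped simultaneously, its effect is trivial as a spatial graph operation, so the net effect is exactly flipping $c$. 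The odd degree of $v$ is what lets the arc close up so that $\partial R$ has the correct parity; in the Eulerian case this parity would be obstructed precisely by the properness condition underlying Theorem \ref{cheng-thm}.

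\textbf{Main obstacle.} The hard part will be making the gadget construction work uniformly for arbitrary $D$ and arbitrary $c$. Specifically, I need to verify that the drag of $e$ can always be organized so that the auxiliary crossings appear in the correct pattern on $\partial R$ and that the odd parity at $v$ genuinely resolves the obstruction that would appear for an even-degree vertex. This will likely require a careful combinatorial bookkeeping of crossings as they are created and cancelled by the Reidemeister moves, together with a clean identification of the region $R$ as a function of the chosen arc from $v$ to $c$. Once this verification is in place, the merging argument from the first paragraph delivers the full lemma for arbitrary $X$.
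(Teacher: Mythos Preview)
Your overall intuition---that a vertex of odd degree is what lets a single crossing change be simulated by region crossing changes after suitable Reidemeister moves---is exactly the mechanism the paper exploits. However, two of your steps do not go through as written.

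\textbf{The merging step fails.} Your induction requires that a sequence of the form ``Reidemeister, then RCC, then Reidemeister, then RCC'' can be rearranged into ``all Reidemeister first, then all RCC''. You justify this by asserting that a region crossing change in a diagram $F'$ obtained from $F$ by a Reidemeister move corresponds to a region crossing change in $F$ inducing the same spatial-graph operation. This is false for spatial graphs: as the paper itself remarks (citing \cite{ST}), unknottability by region crossing changes depends on the chosen diagram, so two Reidemeister-equivalent diagrams can have different RCC-reachable sets of spatial graphs. Your commutation claim would force those sets to coincide. Concretely, a Reidemeister move can create or destroy regions (an RI monogon, the pieces produced by an RII), and there is no general recipe for transporting an RCC across such a move. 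This breaks the inductive chaining: after your first single-flip step you are holding a diagram of $[D_c]$, not of $S$, and you cannot push the subsequent RCCs back past the intervening Reidemeister moves.

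The paper avoids this entirely by \emph{not} inducting. It performs, for every crossing to be changed, a specific Reidemeister modification of $D$ (the stretch $cPv$ of an arc at $c$ along a path in the graph to an odd vertex~$v$), carries out \emph{all} of these stretches first to obtain $E$, and only then takes the symmetric difference of the associated region sets $R(cPv)$ on $E$. When a later stretch subdivides regions from an earlier $R(cPv)$, one simply replaces that region by all of its pieces; the parity of how many selected regions meet each crossing is unaffected. This ``do all Reidemeister moves first, then symmetric-difference the region sets'' is precisely the substitute for your merging argument, and it is where the real content lies.

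\textbf{The single-flip gadget is underspecified, and one odd vertex is not enough.} Your construction drags an edge from a single odd vertex $v$ toward $c$. The paper's mechanism is different and more structured: it stretches an arc \emph{at $c$} along a path $P$ \emph{in the graph} to $v$, and the region set $R(cPv)$ is an explicit symmetric difference of sub-regions of the resulting spur. The parity that makes this work is that each vertex on $P$ contributes an even number of edges crossing the spur (degree minus $2$ at interior vertices of $P$, which are even by hypothesis, and degree minus $1$ at the endpoint $v$, which is even because $\deg v$ is odd). Crucially, this only handles crossings $c$ lying on the component containing $v$. For a crossing $c$ on other components, the paper needs \emph{two} odd vertices $v_1,v_2$: it stretches an arc of a path between them out to $c$, producing four new crossings, and then applies the $cPv$ construction to the two outer ones toward $v_1$ and $v_2$ separately. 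Your single-vertex drag does not obviously produce a region whose boundary carries $c$ together with all auxiliary crossings in cancelling pairs; at minimum you would need to explain how the parity at $v$ alone closes up the boundary cycle, and the paper's need for two odd vertices in the off-component case suggests this is not automatic.
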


For a spatial graph diagram $D$, a crossing $c$, a vertex $v$ and a path $P$ connecting $c$ and $v$, we define the following transformation and denote it by $cPv$. 
Take an (over or under) arc $\alpha$ of $c$ which does not belong to $P$. 
Stretch $\alpha$ along $P$ to pass $v$ as shown in Figure \ref{alongP} (cf. \cite{ozawa-edge}). 
\begin{figure}[ht]
\begin{center}
\includegraphics[width=70mm]{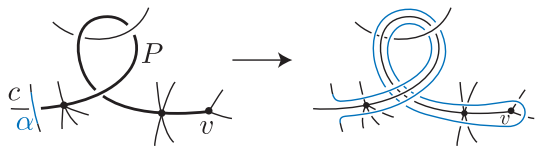}
\caption{The transformation $cPv$. The spur is colored blue on the right figure. }
\label{alongP}
\end{center}
\end{figure}
Note that $cPv$ is realized by Reidemeister moves. 
We call the stretched $\alpha$ the {\it spur} of $cPv$. 
Note that the over/under relationship for the spur to all the edges around the vertices on $P$ are the same to that for $\alpha$ to $P$. 
To prove Lemma \ref{lemma12}, we need the following lemma. 

\begin{lemma}
Let $D$ be a spatial graph diagram, and let $c$ be a crossing and $v$ be a vertex of odd degree, connected by a path $P$, where $P$ has no vertices of odd degree except $v$. 
Let $D'$ be a diagram obtained from $D$ by a crossing change at $c$. 
Let $D''$ be a diagram obtained from $D$ by $cPv$. 
Then $D''$ can be transformed into a diagram representing the same spatial graph to $D'$ by region crossing changes. 
\label{lem-even-vertex}
\end{lemma}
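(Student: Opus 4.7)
The plan is to identify a set $\mathcal{R}$ of regions of $D''$ whose simultaneous region crossing changes flip exactly the crossing $c$ together with every crossing created on the spur, leaving every other crossing of $D$ unchanged. Granting this, the resulting diagram $\tilde D$ coincides with the diagram one would obtain by applying $cPv$ to $D'$ (using the same arc $\alpha$, which now plays the opposite over/under role at $c$): the inheritance property recorded just before the lemma says precisely that reversing the role of $\alpha$ at $c$ forces the role of the spur to reverse at every one of its crossings. Since $cPv$ is realized by Reidemeister moves, $\tilde D$ represents the same spatial graph as $D'$, as required.

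To build $\mathcal{R}$, I would work inside the closed topological disk $\Delta$ on the sphere bounded by the spur, closed up by a short arc through $c$. When the spur is drawn close enough to $P$, the interior of $\Delta$ meets only $P$ and the short initial segments of the edges at the vertices on $P$; in particular, no crossing of $D$ other than $c$ lies in the interior of $\Delta$. These interior arcs subdivide $\Delta$ into sub-regions, and because $\Delta$ is a disk cut by a forest of arcs, the dual adjacency graph of sub-regions is a tree, hence bipartite. I take $\mathcal{R}$ to be one color class of a proper $2$-coloring of this tree, chosen so that the sub-region incident to $c$ inside $\Delta$ belongs to $\mathcal{R}$.

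The verification is then local. Each crossing on the boundary of $\Delta$ --- namely $c$ itself and each spur crossing --- is flanked inside $\Delta$ by exactly two sub-regions that are adjacent in the dual tree, so they receive opposite colors; thus exactly one of them lies in $\mathcal{R}$ and the crossing is flipped once. Each crossing of $D$ outside $\Delta$ is incident to no sub-region in $\mathcal{R}$ and is therefore unchanged.

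The main obstacle is justifying that $\Delta$ can be drawn thinly enough that no crossing of $D$ falls in its interior, and that the interior subdivision really is tree-like all the way along $P$ and around the wrap past $v$. The key parity inputs are that each interior vertex $w$ of $P$ contributes an even number $\deg(w)-2$ of spur crossings (since $\deg(w)$ is even by hypothesis) and that $v$ contributes $\deg(v)-1$, also even; this is exactly where the assumption that $P$ avoids odd-degree vertices except at $v$ is essential, since an intermediate odd-degree vertex would break the local picture. If the direct geometric bookkeeping turns out to be delicate near $v$ or at high-valence interior vertices, a clean alternative is induction on the number of interior vertices of $P$, peeling off an even-degree interior vertex at each step and applying the inductive hypothesis to a shortened path.
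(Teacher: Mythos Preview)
Your overall strategy --- find a set of regions whose region crossing changes flip $c$ together with every new spur-crossing, so that the result is literally $cPv$ applied to $D'$ --- is sound and is close in spirit to the paper's. But your construction of $\mathcal{R}$ fails at a basic level. The assertion that ``$\Delta$ is a disk cut by a forest of arcs, [so] the dual adjacency graph of sub-regions is a tree'' is false. Even in the cleanest situation (no crossings of $D$ on $P$), the arcs inside $\Delta$ form the tree $P\cup\{\text{stubs}\}$, and the face-dual of a disk cut by an embedded tree is \emph{not} a tree: around any interior vertex of degree $d$ the $d$ incident edges produce $d$ sectors lying in $d$ pairwise-adjacent regions, giving a $d$-cycle in the dual. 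At $v$ this cycle has odd length $\deg(v)$, so the dual is not bipartite and your $2$-coloring simply does not exist. (Smallest instance: $P$ a single edge and $\deg(v)=3$; the arcs form a $Y$, the three regions are mutually adjacent, and the dual is $K_3$.) The parity facts you record are correct, but the odd degree of $v$ is precisely what destroys bipartiteness rather than what saves it. Your ``main obstacle'' is also real and is not solved by drawing $\Delta$ thin: any crossing of $D$ lying on $P$ sits inside $\Delta$ regardless, so the forest picture fails for that reason as well.

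The paper sidesteps both issues by not attempting a global checkerboard. It lists the edge-stubs $e_1,\dots,e_l$ in the cyclic order in which the spur meets them, observes (via exactly your parity counts) that $l$ is even, and takes as $R(cPv)$ the symmetric difference of the \emph{alternate} bands $Q_1,Q_3,\dots,Q_{l-1}$, where $Q_i$ collects the regions in the spur between $e_i$ and $e_{i+1}$. The evenness of $l$ is what makes this alternating selection close up consistently around $v$; overlaps coming from self-crossings of $P$ are absorbed by the symmetric difference. Region crossing change at $R(cPv)$ then flips the over/under of the spur at every vertex along $P$ and nothing else, after which the spur can be shrunk back on the opposite side to yield $D'$.
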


\begin{proof}
Let $u_1, u_2, \dots ,u_k=v$ be the vertices on $P$ in order from the side of $c$ along $P$. 
We locally consider edges not on $P$ which are incident to a vertex $u_i$; say $e_1, e_2, \dots ,e_l$, in cyclic order along the spur, as shown in Figure \ref{spur-checker}. 
We remark that $l$ is an even number because each vertex $u_i$ has locally even number of edges which intersect the spur. 
Take the regions in the spur along $P$ between $e_i$ and $e_{i+1}$, where we cancel the region which we encounter twice at a self-crossing of $P$. 
We call the set of the regions $Q_i$. 
\begin{figure}[ht]
\begin{center}
\includegraphics[width=110mm]{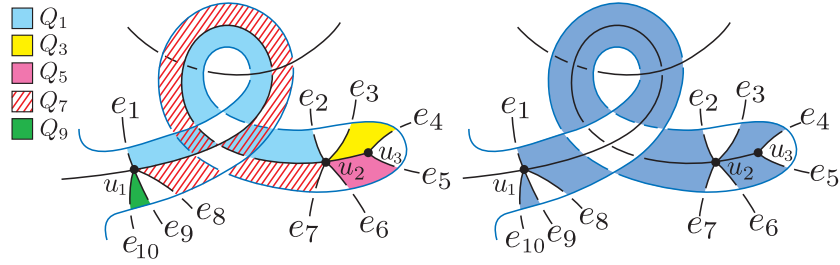}
\caption{Regions in $Q_1, Q_3, Q_5, Q_7, Q_9$ are color-coded on the left. The regions of $R(cPv)$ are colored blue on the right. }
\label{spur-checker}
\end{center}
\end{figure}
Let $R(cPv)$ be the symmetric difference of $Q_1, Q_3, Q_5, \dots ,Q_{l-1}$. 
By applying region crossing changes at all the regions in $R(cPv)$, the over/under relationship around all the vertices in $P$ is changed, and any other crossing is unchanged. 
Hence, we can shrink the spur back through the other side, and obtain $D'$. 
\end{proof}

\phantom{x}
\noindent {\bf Proof of Lemma \ref{lemma12}} \ 
Let $G$ be a connected non-Eulerian graph. 
Let $S$ be a spatial graph consisting of some graphs including $G$, and let $D$ be a diagram of $S$. 
Note that by the Handshaking Lemma, $G$ has two or more even number of vertices of odd degree. 
Let $V_{\mathrm{odd}}(G)$ be the set of all the vertices of odd degree of $G$. 
Let $B= \{ b_1, b_2, \dots ,b_k \}$ be a set of some crossings of $D$ which are on $G$, where a crossing on $G$ means a self-crossing or non-self-crossing which belongs to the diagram of $G$. 
Let $C= \{ c_1, c_2, \dots ,c_l \}$ be a set of some crossings of $D$ which are not on $G$. 
Let $D'$ be a diagram which is obtained from $D$ by crossing changes at all the crossings in $B$ and $C$. 
We show that we can retake a diagram $E$ of $S$ from $D$ so that $E$ can be transformed into a diagram representing the same spatial graph to $D'$ by region crossing changes. \\
(a) Let $F_0 =D$. \\
(b) Take a path $P_i$ on $F_{i-1}$ which connects $b_i$ and one of the vertices $v_i$ in $V_{\mathrm{odd}}(G)$ so that $P_i$ does not include any other vertices in $V_{\mathrm{odd}}(G)$, where $v_i$ and $v_j$ may be the same vertex for $i \neq j$. 
Apply $b_i P_i v_i$. \\
Repeat the procedure (b) from $i=1$ to $i=k$, and let $F=F_k$. 

For $F$, take the symmetric difference $R^F$ of $R(b_i P_i v_i )$ for $i=1, 2, \dots , k$. 
Note that some regions in $R(b_i P_i v_i)$ may be divided by $b_j P_j v_j$ ($i<j$). 
In that case, retake all the corresponding regions as $R(b_i P_i v_i)$. 
Thus, by Lemma \ref{lem-even-vertex}, all the over/under relationship around the vertices of $P_i$ will be changed for every spur of $b_i P_i v_i$ if we apply region crossing changes at the regions of $R^F$. 

\phantom{x}

Then for $F$, take two vertices $v_1$ and $v_2$ from $V_{\mathrm{odd}}(G)$ such that $v_1$ and $v_2$ are connected by a path $P$ which does not have any other vertices of odd degree. \\
(c) Let $E_0 =F$. \\
(d) Take an arc $\alpha$ in $P$, and stretch $\alpha$ to $c_i$ going through the over side of other edges as shown in the middle figure of Figure \ref{face}. 
Four crossings are created and we call the crossings on the ends $c_i^1$ and $c_i^2$. 
Apply $c_i^1 P_i^1 v_1$ and $c_i^2 P_i^2 v_2$, where $P_i^j$ is the path connecting $c_i^j$ and $v_j$, and we remark that there are no vertices of odd degree in $P_i^j$ except $v_j$. 
We call the region adjacent to $c_i$ which is created by the stretch of $\alpha$ in the above procedure $R_i$. 
Note that if we apply region crossing changes at $R_i$, $R(c_i ^1 P_i ^1 v_1)$ and $R(c_i ^2 P_i ^2 v_2 )$, then the over/under relation will be changed at the two spurs and $R_i$, and then if we shrink back the two spurs and $\alpha$, a crossing change at $c_i$ will be realized. 
Repeat the procedure (d) from $i=1$ to $i=l$, and let $E=E_l$. 
\begin{figure}[ht]
\begin{center}
\includegraphics[width=100mm]{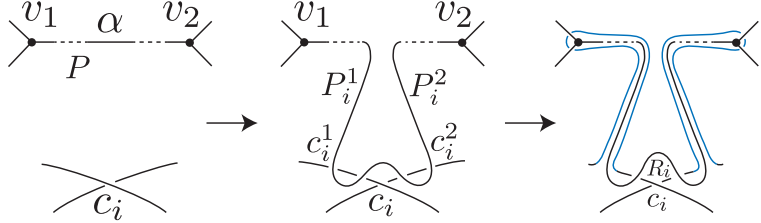}
\caption{The procedure (d). }
\label{face}
\end{center}
\end{figure}

For $E$, take the symmetric difference of $R^F$ and $R_i$ and $R(c_i^j P_i^j v_j )$ for all $i=1, 2, \dots , l$ and $j=1, 2$. 
Note that some regions in $R^F$, $R(c_i^j P_i^j v_j )$ and $R_i$ may be divided by the procedure for $c_h$ ($i<h$). 
In that case, retake all the corresponding regions. 
Thus, by applying region crossing changes and shrinking, we obtain $D'$. 
Note that the above transformations do not influence each other. \\
\hfill$\Box$

\section{Eulerian spatial graphs}
\label{section-Eulerian}

In this section, we review the study of region crossing changes for links and consider Eulerian spatial graphs. 

\subsection{Linking number of links}

Let $L$ be an oriented link of $r$ components $K_1, K_2, \dots , K_r$. 
Let $D$ be a diagram of $L$. 
The {\it linking number $lk(K_i , K_j)$ between $K_i$ and $K_j$} is the value of half the sum of the signs (see Figure \ref{crossing-sign}) for all the crossings between $K_i$ and $K_j$ in $D$. 
\begin{figure}[ht]
\begin{center}
\includegraphics[width=30mm]{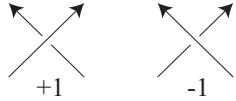}
\caption{The sign of a crossing. }
\label{crossing-sign}
\end{center}
\end{figure}
The value $lk(K_i, K_j)$ is an integer because the number of non-self-crossings of two components is an even number. 
It is well-known that $lk(K_i, K_j)$ is a link invariant since it is unchanged over Reidemeister moves RI, RI\vspace{-0.5pt}I and RI\vspace{-0.5pt}I\vspace{-0.5pt}I (see, for example, \cite{adams}). 
A link $L$ is {\it proper} if the value $\sum _{j \neq i} lk(K_i, K_j)$ is even for all $i \in \{1, 2, \dots ,r \}$ with an orientation. 
The properness does not depend on the choice of orientation because we have $lk(K_i , -K_j)=-lk(K_i , K_j)$, where $-K_j$ means $K_j$ with orientation reversed. 
Since the number of crossings between a component $K_i$ and the other components at the boundary of each region is an even number, the following holds: 

\begin{lemma}[\cite{cheng, CG}]
The properness of a link is preserved over region crossing changes. 
\label{rcc-proper}
\end{lemma}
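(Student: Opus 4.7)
The plan is to prove the statement via the hint given immediately before the lemma: for any region $R$ of the diagram $D$ and any component $K_i$ of $L$, the number of non-self-crossings on $\partial R$ involving $K_i$ is even. Since any sequence of region crossing changes is a composition of single region crossing changes, it suffices to verify that $\sum_{j \neq i} lk(K_i, K_j) \pmod 2$ is preserved under one such move for every $i$.

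To establish the parity claim, I would traverse $\partial R$ as a closed walk along arcs of $D$. At each visit to a crossing on $\partial R$, the walk turns the corner, switching from one strand of the crossing to the other. Color each arc segment of the walk \emph{red} if it belongs to $K_i$ and \emph{blue} otherwise. A red-to-blue or blue-to-red transition occurs precisely at a corner whose crossing has one strand in $K_i$ and the other in a different component, that is, at a non-self-crossing involving $K_i$; self-crossings of $K_i$ give red-to-red transitions, and crossings disjoint from $K_i$ give blue-to-blue transitions. Because the walk closes up, the number of color switches must be even, and this is exactly the count we want (with corner-multiplicity, which differs from the crossing count only by an even amount).

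Given the parity claim, the rest is direct bookkeeping. Flipping the sign of a crossing $c$ changes its contribution to $lk(K_i, K_j)$ by $-\epsilon_c$ whenever $c$ is a non-self-crossing between $K_i$ and $K_j$. Hence after a region crossing change at $R$,
\[
\Delta\!\left(\sum_{j \neq i} lk(K_i, K_j)\right) \;=\; -\!\!\sum_{c \in \partial R,\ c \text{ non-self-crossing involving } K_i} \!\!\epsilon_c.
\]
The right side is an integer whose parity equals the parity of the number of summands; by the previous paragraph this number is even, so $\sum_{j \neq i} lk(K_i, K_j)$ changes by an even integer and its residue mod $2$ is preserved. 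Since this holds for every $i$, properness is preserved.

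The only delicate point is when $R$ meets a single crossing at two opposite corners: the walk then visits that crossing twice, each visit contributes its own color transition, and (depending on convention) the crossing itself is either flipped twice (trivially leaving $lk$ unchanged) or once (still giving a $\pm 1$ contribution counted by the corner-based parity argument). In either case the parity of the change is governed by the number of corner-visits, which is the quantity shown to be even, so the argument is unaffected.
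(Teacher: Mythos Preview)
Your argument is correct and is exactly the approach the paper indicates: the paper does not prove the lemma beyond the sentence preceding it (``the number of crossings between a component $K_i$ and the other components at the boundary of each region is an even number'') together with the citations, and your two-coloring of the boundary walk is a standard way to justify that parity statement.

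One clarification about your final paragraph: the worry is vacuous, because a crossing meeting a region at two \emph{opposite} corners is automatically a self-crossing. Indeed, through such a crossing one can draw a simple closed curve $\gamma$ meeting the diagram only at that point; $\gamma$ intersects each of the two strands there exactly once, and since any closed component of the link must meet $\gamma$ in an even number of points, both strands must lie on the same component. (Meeting at two \emph{adjacent} corners would force a bridge in the underlying $4$-regular graph, which is impossible.) Hence every non-self-crossing involving $K_i$ appears at exactly one corner of $R$, so your corner count equals the crossing count outright, and the convention-dependent discussion is unnecessary.
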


\noindent The following lemma is shown in \cite{cheng}. 

\begin{lemma}[\cite{cheng}]
Let $D$ be a diagram of a link. 
Take $n$ knot components $D_1 , D_2 , \dots , D_n$ such that $D_i$ and $D_j$ have crossings for $| i-j | =1$ or $| i-j | =n-1$. 
Let $C$ be a set of $n$ crossings $c_1 , c_2 , \dots , c_n$ where $c_i$ is of $D_i$ and $D_{i+1}$ with $D_{n+1}=D_1$. 
Then the crossing changes at the $n$ crossings in $C$ can be realized by region crossing changes on $D$. 
\label{cycle-cc}
\end{lemma}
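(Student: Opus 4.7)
The plan is to reduce the lemma to the well-known parity characterization of crossing-change patterns realizable by region crossing changes, and then to verify the parity condition for $C = \{c_1, \ldots, c_n\}$ by hand. Working over $\mathbb{F}_2$, let $W$ be the subspace of $\mathbb{F}_2^{\mathrm{Cr}(D)}$ spanned by the characteristic vectors $\partial R$ of the crossings on the boundary of each region $R$ of $D$; a set of crossings is togglable by a sequence of region crossing changes precisely when its characteristic vector lies in $W$.

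The first step is to show that $W$ equals the subspace cut out by the component-wise parity conditions: a vector $v$ belongs to $W$ if and only if, for every component $K$ of $D$, the sum of the entries of $v$ over the non-self-crossings incident to $K$ is $0 \pmod 2$. The easy inclusion is that each region $R$ meets each component $K$ along a disjoint union of arcs, contributing an even number of non-self-crossings to $\partial R$. The harder inclusion, that every parity-consistent pattern lies in $W$, is the main obstacle; I would establish it by, for each pair of components and each even set $S$ of non-self-crossings between them, exhibiting an explicit sum of regions that toggles $S$ together with a (possibly empty) set of self-crossings, and then invoking Theorem~\ref{shimizu-thm} componentwise to clean up the resulting self-crossing noise. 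The arc-along-a-path technique used in the proof of Lemma~\ref{lem-even-vertex} is the natural geometric tool for the inter-component construction.

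Once the characterization is in place, the lemma is immediate. The set $C$ meets each $D_i$ (for $i \in \{1, \ldots, n\}$) in exactly the two non-self-crossings $c_{i-1}$ and $c_i$, where indices are taken modulo $n$, and meets every other component of $D$ in zero crossings. Since $2$ and $0$ are even, the characteristic vector of $C$ satisfies the component-wise parity condition, so it lies in $W$, and the simultaneous crossing changes at $c_1, \ldots, c_n$ are realizable by region crossing changes on $D$.
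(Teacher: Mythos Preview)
The paper does not give its own proof of this lemma; it is quoted from \cite{cheng} and used as a black box. So there is no in-paper argument to compare against, and your proposal is really an attempt to supply a proof that the authors deliberately outsourced.

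On the mathematics: your overall strategy---establish the $\mathbb{F}_2$ characterization of $W$ by per-component parity, then check that $C$ satisfies it---is sound in principle, and the final verification is correct. The problem is in your sketch of the ``harder inclusion.'' You propose to show that $W$ contains every parity-consistent vector by handling, for each \emph{pair} of components, every \emph{even} set of non-self-crossings between them, then cleaning up self-crossings via Theorem~\ref{shimizu-thm}. But that only generates vectors whose inter-component support has even cardinality \emph{pairwise}, which is strictly weaker than the per-component parity condition you stated. The set $C$ in the lemma is precisely the kind of vector your construction misses: for $n\geq 3$ it has exactly one crossing between each consecutive pair $D_i,D_{i+1}$, so every pairwise count is odd, yet each component sees two crossings. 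Producing such cyclic odd patterns by region crossing changes is exactly the content of Lemma~\ref{cycle-cc} itself, so your sketch as written is circular.

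To close the gap you would need a direct geometric construction for the cycle case (this is what Cheng actually does: build a closed curve through $c_1,\dots,c_n$ using arcs of the $D_i$ and take a checkerboard selection of the regions it bounds), or else prove the full parity characterization by a rank/dimension count on the incidence matrix as in \cite{CG}, rather than by the pairwise decomposition you outlined.
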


\noindent In particular, the following lemma holds. 

\begin{lemma}[\cite{cheng, CG}]
Let $D$ be a diagram of a link. 
Crossing changes on $D$ at any pair of crossings between two knot components, say $D_i$ and $D_j$, can be realized by region crossing changes for any $i$ and $j$. 
\label{pair-cc}
\end{lemma}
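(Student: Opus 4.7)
The plan is to deduce Lemma \ref{pair-cc} as the $n=2$ specialization of Lemma \ref{cycle-cc}. Fix indices $i \neq j$ and fix a pair of distinct crossings $c, c'$ between the components $D_i$ and $D_j$. I would relabel by setting $D_1 := D_i$, $D_2 := D_j$ and choose $c_1 := c$, $c_2 := c'$. The cyclic adjacency hypothesis in Lemma \ref{cycle-cc}, requiring that $D_a$ and $D_b$ have crossings whenever $|a-b| = 1$ or $|a-b| = n-1$, collapses for $n=2$ to the single condition that $D_1$ and $D_2$ share at least one crossing, which is immediate from our choice.

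Next I would check the crossing-labelling condition. With the cyclic convention $D_{n+1}=D_1$, Lemma \ref{cycle-cc} asks that $c_a$ be a crossing between $D_a$ and $D_{a+1}$. For $n=2$ this requires $c_1$ to be a crossing of $D_1, D_2$ and $c_2$ to be a crossing of $D_2, D_1$, so both conditions reduce to the requirement that $c_1$ and $c_2$ are each crossings between $D_1$ and $D_2$, which holds by construction. Lemma \ref{cycle-cc} then supplies a set of regions of $D$ whose simultaneous region crossing changes realize exactly the crossing changes at $c_1$ and $c_2$, and at no other crossing of $D$.

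There is no real obstacle in the argument; the whole substantive content sits in the proof of Lemma \ref{cycle-cc}, which is cited. The only point worth flagging is the observation that when $n=2$ the ``cycle'' $D_1 \to D_2 \to D_1$ is forced to pass through two distinct crossings between the same pair of components, which is precisely the configuration Lemma \ref{pair-cc} asks about. Since the pair $(i,j)$ and the pair of crossings were arbitrary, the lemma follows in full generality.
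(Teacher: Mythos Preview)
Your proposal is correct and matches the paper's approach exactly: the paper introduces Lemma~\ref{pair-cc} with the phrase ``In particular, the following lemma holds'' immediately after Lemma~\ref{cycle-cc}, indicating (without further argument) that it is the $n=2$ specialization you spell out.
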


\subsection{Linking number of Eulerian spatial graphs}

In this subsection, we give the definition of the linking number for Eulerian spatial graphs, which is equivalent to the definition given in \cite{ohyama}. 
Let $G$ be a graph with an orientation on each edge. 
For a vertex $v$, the {\it indegree} (resp. {\it outdegree}) is the number of incident edges whose orientation is incoming to (resp. outgoing from) $v$. 
Let $S$ be an Eulerian spatial graph consisting of connected graphs $G_1, G_2, \dots ,G_r$.  
Give an orientation $O_i$ to $G_i$ so that the indegree equals the outdegree at each vertex of $G_i$. 
We call the orientation an {\it Eulerian orientation}. 
Note that we can take an Eulerian orientation for every Eulerian graph since it has an ``Eulerian circuit''. 

Unless otherwise stated, an oriented Eulerian spatial graph means an Eulerian spatial graph with an Eulerian orientation in this paper. 
We define the linking number for oriented Eulerian spatial graphs. 
Let $D$ be a diagram of an oriented Eulerian spatial graph $S=S_1 \cup S_2 \cup \dots \cup S_r$. 
The {\it linking number} $lk(S_i , S_j)$ between $S_i$ and $S_j$ is the value of half the sum of the signs for all the crossings between $S_i$ and $S_j$ in $D$. 
The value of $lk(S_i, S_j)$ is an integer since we can confirm that the number of crossings between $S_i$ and $S_j$ is an even number by considering their Eulerian circuits and assuming them a link. 
The value $lk(S_i , S_j)$ is preserved over Reidemeister moves RI, RI\vspace{-0.5pt}I and RI\vspace{-0.5pt}I\vspace{-0.5pt}I as well as for links. 
For RIV, the value is also preserved because the number of positive crossings and negative crossings are the same around a vertex which is applied an RIV. 
For RV, the value is unchanged because there are no change for non-self crossings. 
Hence $lk(S_i , S_j)$ is an invariant for oriented Eulerian spatial graphs. 
Moreover, we have the following: 

\begin{lemma}
The parity of $lk(S_i, S_j)$ is an invariant for (unoriented) Eulerian spatial graphs. 
\label{parity-lk-g}
\end{lemma}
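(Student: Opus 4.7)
\medskip
\noindent\textbf{Proof plan for Lemma \ref{parity-lk-g}.}
We already know that for a fixed Eulerian orientation the integer $lk(S_i,S_j)$ is a Reidemeister invariant, so the only remaining issue is dependence on the choice of Eulerian orientation. The plan is to show that if $O_i$ and $O_i'$ are any two Eulerian orientations on $G_i$ (and similarly for $G_j$), then the value of $lk(S_i,S_j)$ computed with one choice differs from that computed with the other by an even integer. By symmetry it suffices to handle the change $O_i\rightsquigarrow O_i'$; the change $O_j\rightsquigarrow O_j'$ is analogous, and the two changes can be performed successively.

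First I would make the following graph-theoretic observation. Let $F\subseteq E(G_i)$ be the set of edges on which $O_i$ and $O_i'$ disagree. At each vertex $v$ of $G_i$, if $a$ edges of $F$ incident to $v$ are oriented into $v$ under $O_i$ and $b$ are oriented out of $v$ under $O_i$, then demanding that both orientations be Eulerian forces $a=b$; hence each vertex has even degree in $F$. Therefore $F$ is an Eulerian subgraph and decomposes into edge-disjoint (closed) cycles $C_1,\dots,C_s$. Passing from $O_i$ to $O_i'$ amounts to reversing the orientation along each $C_t$ in turn, so it suffices to prove that reversing the orientation along a single oriented cycle $C\subset G_i$ changes $lk(S_i,S_j)$ by an even integer.

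Now I would carry out the sign computation on a fixed diagram $D$ of $S$. Reversing the orientation of every edge of $C$ flips the sign of exactly those crossings of $D$ between $S_i$ and $S_j$ whose $S_i$-strand lies on $C$, and leaves all other $S_i\cap S_j$-crossings alone. If $\Sigma_C$ denotes the signed count of crossings in $D$ between $C$ and $S_j$, then the sum of signs of all $S_i\cap S_j$-crossings changes by $-2\Sigma_C$, so $lk(S_i,S_j)$ changes by $-\Sigma_C$. Therefore it remains to show that $\Sigma_C$ is even.

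For this, I would decompose the oriented Eulerian graph $S_j$ (equipped with its Eulerian orientation $O_j$) into edge-disjoint oriented cycles $C_1',\dots,C_m'$, which is possible because every Eulerian graph has an Eulerian circuit and hence an edge-disjoint cycle decomposition respecting any Eulerian orientation. Then
\[
\Sigma_C \;=\; \sum_{k=1}^{m}\bigl(\text{signed count of crossings between } C \text{ and } C_k'\bigr) \;=\; \sum_{k=1}^{m} 2\,lk(C,C_k'),
\]
because $C$ and each $C_k'$ are oriented circles in the $3$-sphere whose linking number is an integer (and the signed count of their crossings is twice that linking number). Thus $\Sigma_C$ is even, completing the reduction, and hence the parity of $lk(S_i,S_j)$ is independent of the Eulerian orientations and is a genuine invariant of the unoriented Eulerian spatial graph.

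The main obstacle is the observation that two Eulerian orientations differ by reversals along an edge-disjoint union of cycles; once this is in place, everything reduces to the standard integrality of the linking number between two oriented circles in $S^3$, which is the mechanism that makes the signed crossing count even.
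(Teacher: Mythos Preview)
Your proof is correct and follows essentially the same route as the paper: both identify the set of edges on which the two Eulerian orientations differ as an Eulerian subgraph $H$ (your $F$), and then argue that the signed crossing count between $H$ and $S_j$ is even because both are Eulerian and hence can be regarded as links. The only difference is presentational: the paper works with $H$ as a whole (invoking that the number of crossings between two Eulerian objects is even, so the signed sum is even), whereas you decompose $F$ and $S_j$ into edge-disjoint cycles and reduce to the integrality of ordinary linking numbers between circles; this extra decomposition is not needed but does no harm.
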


\begin{proof}
Let $lk(S_i, S_j)$ be the linking number between $S_i$ and $S_j$ with Eulerian orientations $O_i$ of $S_i$ and $O_j$ of $S_j$. 
Let $lk'(S_i, S_j)$ be that with Eulerian orientations ${O_i}'$ of $S_i$ and $O_j$ of $S_j$. 
Take the subgraph $H$ of $S_i$ by taking edges which have different orientations between $O_i$ and ${O_i}'$. 
Then $H$ is an Eulerian subgraph of $S_i$. 
Since $H$ and $S_j$ are Eulerian, the number of crossings between $H$ and $S_i$ is an even number. 
Hence the difference between $lk(S_i, S_j)$ and $lk'(S_i, S_j)$ is the half of some multiples of four, i.e., multiples of two. 
\end{proof}

\noindent An Eulerian spatial graph $S$ of $r$ components is {\it proper} if $\sum _{j \neq i} lk(S_i, S_j)$ is even for all $i \in \{1, 2, \dots ,r \}$ with an Eulerian orientation. 
We have the following corollary as well as Lemma \ref{rcc-proper}. 

\begin{corollary}
For diagrams of an Eulerian spatial graph, the properness is preserved over region crossing changes. 
\label{rcc-proper-g}
\end{corollary}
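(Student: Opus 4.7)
The plan is to mirror the proof of Lemma \ref{rcc-proper} for links, with the only new ingredient being the treatment of vertices on $\partial R$. Fix a diagram $D$ of the oriented Eulerian spatial graph $S = S_1 \cup \cdots \cup S_r$, pick a region $R$, and let $D'$ be the diagram obtained from $D$ by a region crossing change at $R$. For each $i$, the change in $\sum_{j \neq i} lk(S_i, S_j)$ is a sign-weighted sum, over the non-self crossings on $\partial R$ that involve $S_i$, of the signs they carry in $D$; in particular, its parity equals the parity of the number of such crossings. So the whole problem reduces to showing that this count is even.

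To show evenness, I would traverse $\partial R$ as a cyclic sequence of arcs and label each arc by the component of $S$ containing it. At every vertex of $S$ that lies on $\partial R$, the two arcs of $\partial R$ adjacent to the vertex are both edges at that vertex and hence lie in its component; at every self-crossing on $\partial R$, the two adjacent arcs are pieces of the two strands, which belong to a single component. The label can therefore change only at non-self crossings on $\partial R$. Since the label sequence is cyclic, the number of positions at which it switches between ``equal to $S_i$'' and ``not equal to $S_i$'' is even, which is exactly the desired count.

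The main place I expect to have to be careful is the case of a crossing that belongs to $\partial R$ with multiplicity two, i.e.\ a crossing at which $R$ occupies two opposite quadrants. This is the same delicate step that appears in the proof of Lemma \ref{rcc-proper} and I would handle it identically: such an occurrence contributes to the cyclic boundary walk in a way that keeps the parity bookkeeping correct. The Eulerian hypothesis enters only to make $lk(S_i, S_j)$ well-defined and to ensure that properness itself makes sense; the combinatorial core of the argument is exactly the one used for links.
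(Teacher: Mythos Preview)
Your proposal is correct and follows the same approach as the paper. The paper's own proof is a one-liner that simply asserts the key fact you spell out---namely, that for each $i$ the number of crossings on $\partial R$ between $S_i$ and the other components is even---while you supply the standard boundary-walk justification for that fact; the Eulerian hypothesis is used in both only to make $lk(S_i,S_j)$ and properness well-defined.
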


\begin{proof}
Fix an Eulerian orientation. 
Since the number of crossings between $S_i$ and the other components at the boundary of each region is an even number, the parity of $\sum _{i \neq j} lk(S_i , S_j)$ is unchanged by region crossing changes for each $i$. 
\end{proof}

Next, we introduce the warping degree for spatial graph diagrams and consider the relation to the linking number for Eulerian spatial graph diagrams. 
Let $D=D_1 \cup D_2 \cup \dots \cup D_r$ be a diagram of a spatial graph of $r$ components with an order. 
A {\it warping crossing point between $D_i$ and $D_j$} ($i<j$) is a crossing point such that $D_j$ is over than $D_i$. 
The {\it warping degree $w(D_i , D_j)$ between $D_i$ and $D_j$} ($i<j$) is the number of warping crossing points between $D_i$ and $D_j$. 
Note that a diagram $D$ with $w(D_i, D_j)=0$ for all $i<j$ represents a completely splitted spatial graph. 
The following holds. 
(See \cite{kawauchi} and \cite{shimizu-wd} for links.) 

\begin{lemma}
For any diagram $D=D_1 \cup D_2 \cup \dots \cup D_r$ of an oriented Eulerian spatial graph $S=S_1 \cup S_2 \cup \dots \cup S_r$, 
$w(D_i , D_j) \equiv lk(S_i , S_j) \pmod 2$ holds for each $i<j$, where each $D_k$ corresponds to $S_k$. 
\label{ld-lk}
\end{lemma}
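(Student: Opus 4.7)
The plan is to reduce the lemma to the analogous parity relation for oriented links, which is the content of \cite{kawauchi, shimizu-wd} cited just before the statement. The reduction proceeds by replacing each Eulerian component of $S$ by a genuine knot obtained by following an Eulerian circuit.

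Concretely, I would fix the Eulerian orientation on $S$ and, for each $i$, choose an Eulerian circuit $C_i$ of $G_i$ compatible with $O_i$; this exists because $G_i$ is connected with indegree equal to outdegree at every vertex. I then smooth every vertex $v$ of $D_i$ according to how $C_i$ pairs the half-edges at $v$, replacing $v$ by $\deg(v)/2$ local arcs. The result is an oriented link diagram $\tilde D=\tilde D_1\cup\cdots\cup\tilde D_r$ of an oriented link $\tilde L$, with over/under information at the newly created double points chosen arbitrarily. By construction, every such new double point lies between two arcs of the same $\tilde D_i$ and is therefore a self-crossing of $\tilde D_i$, while every non-self crossing between $\tilde D_i$ and $\tilde D_j$ is inherited, with the same over/under data and the same local orientations, from a non-self crossing between $D_i$ and $D_j$. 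Hence $w(\tilde D_i,\tilde D_j)=w(D_i,D_j)$ and $lk(\tilde L_i,\tilde L_j)=lk(S_i,S_j)$.

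Applying the link case of the parity lemma to $\tilde D$ yields $w(\tilde D_i,\tilde D_j)\equiv lk(\tilde L_i,\tilde L_j)\pmod 2$, and this transfers immediately to the desired congruence for $D$ and $S$. The main point to verify is that the signs at the inherited crossings really do match, and this is where the Eulerian orientation is essential: at each vertex $v$ the circuit $C_i$ pairs an incoming edge with an outgoing edge, producing a consistently oriented closed curve whose orientation at any point away from the smoothed vertices agrees with that inherited from $S_i$; accordingly, the local picture at each non-self crossing of $\tilde D_i$ with $\tilde D_j$ coincides with the corresponding one in $D$, so the two signs agree.
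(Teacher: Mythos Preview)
Your argument is correct, but it is not the paper's argument. The paper proves the congruence directly for the Eulerian spatial graph diagram, without passing through the link case: starting from $w(D_i,D_j)=k$, one performs crossing changes at the $k$ warping crossings between $D_i$ and $D_j$; the resulting pair is split, so its linking number vanishes, and since each single crossing change between $D_i$ and $D_j$ alters $lk(S_i,S_j)$ by $\pm 1$, one concludes $lk(S_i,S_j)\equiv k\pmod 2$. The citations to \cite{kawauchi,shimizu-wd} merely point to where the link version is recorded, not to an input the proof relies on.

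Your reduction via Eulerian-circuit smoothing is in the spirit of the vertex-splitting technique the paper uses elsewhere (Section~3.3), and it has the virtue of making explicit why the Eulerian hypothesis matters: it is exactly what allows each $D_i$ to be turned into a single knot component with a consistent orientation so that both $w$ and $lk$ survive unchanged. The paper's approach, by contrast, is shorter and self-contained; it never needs to construct the auxiliary link or invoke the link-case lemma, since the ``change-$k$-crossings-to-split'' trick already works verbatim at the level of spatial graph diagrams.
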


\begin{proof}
If $w(D_i , D_j)=k$, apply crossing changes at all the warping crossing points between $D_i$ and $D_j$, and let $D^0=D^0 _1 \cup D^0 _2 \cup \dots \cup D^0 _r$ be the obtained diagram, where $D^0 _i$ corresponds to $D_i$. 
Since $w(D^0 _i , D^0 _j)=0$, the two components represented by $D^0 _i$ and $D^0 _j$ are split and hence the linking number is zero. 
This implies that $lk(S_i , S_j) \equiv k \pmod 2$ because the value of the linking number is changed by one by each single crossing change. 
Hence $w(D_i , D_j) \equiv lk(S_i , S_j) \pmod 2$ holds. 
\end{proof}

\subsection{Vertex splittings}

For a graph, a {\it vertex splitting at a vertex $v$ into $v'$ and $v''$} is the following transformation. 
Add two vertices $v'$ and $v''$, reattach the edges incident to $v$ to exactly one of $v'$ or $v''$, and remove $v$ (see Figure \ref{vertex-splitting}). 
We have the following: 

\begin{lemma}
Let $G$ be a connected Eulerian graph, and let $e_1, e_2$ and $e_3$ be edges of $G$ which is incident to a vertex $v$. 
Let $G_{12}$ (resp. $G_{23}$) be a graph obtained from $G$ by a vertex splitting of $v$ such that only $e_1$ and $e_2$ (resp. $e_2$ and $e_3$) are incident to $v'$. 
Either $G_{12}$ or $G_{23}$ is connected. 
\end{lemma}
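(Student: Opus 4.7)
The plan is to argue by contradiction, assuming both $G_{12}$ and $G_{23}$ are disconnected, and to derive a parity contradiction from the Eulerian hypothesis. Let $H := G - v$ with connected components $H_1, \dots, H_m$, write $k = \deg_G(v)$ (which is even, hence $\geq 4$ since $e_1, e_2, e_3$ are distinct), and for each edge $e_i$ incident to $v$ let $c(i) \in \{1, \dots, m\}$ denote the index of the component of $H$ containing the endpoint of $e_i$ other than $v$. Label the remaining edges at $v$ arbitrarily as $e_4, \dots, e_k$.

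The first ingredient I would establish is a parity statement: for each $t$, the quantity $|\{i : c(i) = t\}|$ (the number of edges from $v$ into $H_t$) is a positive even integer. Positivity follows from $G$ being connected, and evenness follows from the handshake lemma applied inside $G$ to the vertex set of $H_t$: the sum $\sum_{u \in V(H_t)} \deg_G(u)$ is even since $G$ is Eulerian, and it equals $2|E(H_t)| + |\{i : c(i) = t\}|$. The second ingredient is the following criterion: since each component $H_t$ is joined, in $G_{IJ}$, to $v'$ or to $v''$ (via any $e_i$ with $c(i) = t$), the graph $G_{IJ}$ is connected if and only if some common index appears in both $\{c(i) : i \in \{I,J\}\}$ and $\{c(j) : j \notin \{I,J\}\}$.

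Applying this criterion, the hypothesis that $G_{12}$ is disconnected translates to $\{c(1), c(2)\} \cap \{c(3), c(4), \dots, c(k)\} = \emptyset$, and the hypothesis that $G_{23}$ is disconnected translates to $\{c(2), c(3)\} \cap \{c(1), c(4), \dots, c(k)\} = \emptyset$. Together these two conditions force $c(1), c(2), c(3)$ to be pairwise distinct and each of them to differ from every $c(j)$ with $j \geq 4$. In particular the value $c(1)$ occurs exactly once in the multiset $\{c(1), c(2), \dots, c(k)\}$, which contradicts the even multiplicity established in the first ingredient. Hence at least one of $G_{12}, G_{23}$ is connected.

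The only delicate step is verifying the connectedness criterion for $G_{IJ}$, but this is immediate once one notes that in $G_{IJ}$ every vertex of $H$ lies in the component of either $v'$ or $v''$. The Eulerian parity observation is the real engine of the argument, and using it together with the forced distinctness of $c(1), c(2), c(3)$ delivers the contradiction cleanly.
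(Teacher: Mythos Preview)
Your proof is correct, but it follows a different route from the paper's.

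The paper argues directly rather than by double contradiction: it assumes $G_{12}$ is disconnected, observes that $G_{12}$ is still Eulerian, and hence the component containing $v'$ (which has degree~$2$) contains a cycle $H$ through $e_1$ and $e_2$. The path $H - v'$ then lives in $G-v$ and joins the non-$v$ endpoints of $e_1$ and $e_2$; in $G_{23}$ this path, together with $e_1$ (attached to $v''$) and $e_2$ (attached to $v'$), connects $v'$ to $v''$, forcing $G_{23}$ to be connected. So the paper uses the Eulerian hypothesis to \emph{produce} a cycle, whereas you use it only as a parity constraint on the number of edges from $v$ into each component of $G-v$. The paper's argument is shorter and more constructive; your component-counting argument is a clean alternative that makes the obstruction (an odd edge-count into some $H_t$) completely explicit and would adapt easily to variants involving other pairs of splittings.
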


\begin{proof}
Assume that $G_{12}$ is not connected. 
Then $G_{12}$ has a cycle $H$ including $e_1$ and $e_2$ since $G_{12}$ is also Eulerian. 
Then, there is a path connecting $e_1$ and $e_2$ in $G_{23}$ which corresponds to $H - v'$ in $G_{12}$. 
This implies $G_{23}$ is connected. 
\end{proof}
\begin{figure}[ht]
\begin{center}
\includegraphics[width=95mm]{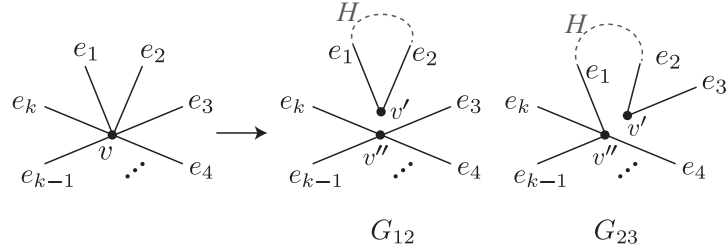}
\caption{Vertex splitting. }
\label{vertex-splitting}
\end{center}
\end{figure}

By repeating vertex splittings to $G$ keeping connected, and ignoring vertices of degree two, we obtain a closed curve without vertices. 
In terms of spatial graphs, we obtain a knot from a connected Eulerian spatial graph. 
We show the following: 

\begin{lemma}
Any diagram $D$ of a spatial graph of a connected Eulerian planar graph can be unknotted by region crossing changes. 
\label{conn-pl-E}
\end{lemma}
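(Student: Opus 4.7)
The plan is to reduce the lemma to the knot case covered by Theorem \ref{shimizu-thm} via the vertex splitting machinery developed above. Since $G$ is planar, the spatial graph $S$ can be unknotted by classical crossing changes, so there exists a set $C$ of crossings of $D$ whose simultaneous flip produces a diagram representing a planar (unknotted) embedding of $G$. It therefore suffices to realize the flip at $C$ by a sequence of region crossing changes on $D$.

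First I would iteratively apply the vertex splitting lemma just established: at each vertex of degree at least $4$, choose a splitting that preserves connectedness. This also preserves the Eulerian property, since each splitting preserves the parity of every vertex degree. After finitely many steps, the graph reduces to a connected Eulerian graph all of whose vertices have degree $2$, i.e., a single cycle. Carrying out these splittings locally in the diagram transforms $D$ into a knot diagram $D^{K}$ that shares the same set of crossings with $D$. By Theorem \ref{shimizu-thm}, there is a set $\mathcal{R}^{K}$ of regions of $D^{K}$ whose region crossing change flips exactly the crossings in $C$.

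The heart of the proof is the region correspondence. Each region of $D^{K}$ is a union of regions of $D$, because each local splitting merges two opposite sectors at the split vertex into one enlarged face via the gap introduced between $v'$ and $v''$, and never subdivides an existing face. Let $\mathcal{R}$ denote the collection of regions of $D$ that together constitute the regions in $\mathcal{R}^{K}$. Since split vertices are never crossings, the new boundary introduced between $v'$ and $v''$ contains no crossings, so the number of corners of a given crossing lying in a merged region of $D^{K}$ equals the sum of the corresponding numbers in its constituent regions of $D$. Taking parities, RCC at $\mathcal{R}$ in $D$ flips each crossing modulo $2$ the same way as RCC at $\mathcal{R}^{K}$ in $D^{K}$, namely at exactly the crossings of $C$, and the resulting diagram represents an unknotted spatial graph of $G$.

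The main obstacle is the bookkeeping for the region correspondence across iterated splittings: one must confirm that after any sequence of splittings, a region of $D^{K}$ is still obtained as a disjoint union of regions of $D$ together with gaps between split vertices that carry no crossings, and that crossing-incidence parities propagate accordingly. This is combinatorial rather than conceptually hard, and reduces to the local observation above together with the fact that later splittings only affect regions incident to the vertex being split.
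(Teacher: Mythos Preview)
Your proposal is correct and follows essentially the same route as the paper: reduce $D$ to a knot diagram via connectivity-preserving vertex splittings, realize the desired crossing changes there, and pull the regions back along the merging of faces. One small point: you cite Theorem~\ref{shimizu-thm}, which as stated only guarantees unknottability, whereas the step ``there is a set $\mathcal{R}^{K}$ whose region crossing change flips exactly the crossings in $C$'' needs the stronger fact (also from \cite{shimizu-rcc}, and what the paper actually invokes) that \emph{any} prescribed crossing change on a knot diagram can be realized by region crossing changes.
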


\begin{proof}
Let $C$ be a set of crossings of $D$ such that the crossing changes at all the crossings in $C$ make $D$ unknotted. 
Apply the above vertex splittings to $D$ to obtain a knot diagram $D^k$. 
Since any crossing change on any knot diagram can be realized by region crossing changes as shown in \cite{shimizu-rcc}, $D^k$ has a set $R$ of regions such that the region crossing changes realize the crossing changes at all the crossings in $C$. 
Apply region crossing changes to $D$ at the corresponding regions to $R$. 
\end{proof}

\noindent Note that Lemma \ref{conn-pl-E} is contained by Theorem 1.3 of \cite{HSS}. 
We have the following. 

\begin{lemma}
Let $D$ be a diagram of an Eulerian spatial graph, and let $D'$ be a link diagram obtained from $D$ by vertex splittings on each component. 
The follwoing (i) to (iv) are equivalent: \\
(i) $D$ is completely splittable by region crossing changes. \\
(ii) $D'$ is completely splittable by region crossing changes. \\
(iii) $D'$ is proper. \\
(iv) $D$ is proper. 
\label{lemma-four}
\end{lemma}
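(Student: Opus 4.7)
My plan is to prove the cycle $(i) \Rightarrow (iv) \Rightarrow (iii) \Rightarrow (ii) \Rightarrow (i)$. For $(i) \Rightarrow (iv)$: a completely split spatial graph has all pairwise linking numbers zero and is hence proper, so Corollary \ref{rcc-proper-g} forces the original $S$ to be proper. For $(iv) \Leftrightarrow (iii)$: the vertex splittings producing $D'$ from $D$ do not create, destroy, or alter any crossing, so the warping degrees satisfy $w(D_i,D_j)=w(D'_i,D'_j)$ for all $i<j$; applying Lemma \ref{ld-lk} to both $D$ and $D'$ then gives $lk(S_i,S_j) \equiv lk(D'_i,D'_j) \pmod 2$, so the parity conditions defining properness agree for $S$ and for the link represented by $D'$. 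For $(iii) \Rightarrow (ii)$: Theorem \ref{cheng-thm} (Cheng's theorem) provides an RCC sequence realizing the trivial link, which is in particular completely split.

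The main work is in $(ii) \Rightarrow (i)$. I will first observe that each diagram-level vertex splitting merges exactly two regions of $D$---the two regions that are now connected through the new gap between $v'$ and $v''$---into a single region of $D'$, while leaving every crossing intact. This yields a surjection $\pi:R(D)\to R(D')$, and a local check shows that for every crossing $c$ and every $r'\in R(D')$, the corners of $r'$ at $c$ are the disjoint union of the corners at $c$ of the regions in $\pi^{-1}(r')$. Therefore the RCC on $D'$ at a subset $\mathcal S'\subseteq R(D')$ and the RCC on $D$ at $\pi^{-1}(\mathcal S')$ change exactly the same crossings. Pulling back the hypothesized complete-splitting RCC sequence from $D'$ to $D$ produces a diagram $D_{\mathrm{new}}$ of some spatial graph $S_{\mathrm{new}}$ whose vertex splitting is the diagram $D'_{\mathrm{new}}$ of a completely split link $L_{\mathrm{new}}$.

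It remains to show $S_{\mathrm{new}}$ is itself completely split. I will take an ambient isotopy of $S^3$ placing each component of $L_{\mathrm{new}}$ inside a ball $B_i$ with the $B_i$ pairwise disjoint. Because every vertex splitting used to construct $D'$ from $D$ was performed within a single graph component, each pair of points that must be re-identified to reverse a vertex splitting lies in the same component of $L_{\mathrm{new}}$ and hence in the same $B_i$. Performing each vertex merging inside its ball (by joining the two points by an arc within $B_i$ and contracting) embeds each component of $S_{\mathrm{new}}$ inside its $B_i$, so $S_{\mathrm{new}}$ is completely split. The main obstacle is this last geometric step: the region correspondence $\pi$ is a clean local verification, but the transition from a completely split link back to a completely split spatial graph requires an ambient-isotopy argument using that every vertex splitting was intra-component, so that the vertex mergings needed to recover $S_{\mathrm{new}}$ can all be realized inside the disjoint balls.
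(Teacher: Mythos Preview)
Your cycle $(i)\Rightarrow(iv)\Rightarrow(iii)\Rightarrow(ii)\Rightarrow(i)$ and the first three implications match the paper's argument essentially verbatim (the paper phrases $(iii)\Leftrightarrow(iv)$ by directly comparing signs rather than going through warping degree, but the content is the same). One small inaccuracy: vertex splittings on a diagram \emph{can} create extra crossings when the two edges being peeled off are not adjacent in the cyclic order around the vertex; the paper explicitly allows for this and notes that such crossings are self-crossings, so they do not affect inter-component warping degrees or properness. Your $(iii)\Leftrightarrow(iv)$ survives this, but your region--correspondence argument for $(ii)\Rightarrow(i)$ tacitly assumes no new crossings are introduced, so you should either justify that the splittings can always be chosen planar (they can, using the connectedness lemma on three cyclically consecutive edges) or adjust the description of $\pi$.

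The substantive divergence from the paper is in $(ii)\Rightarrow(i)$. The paper does \emph{not} stop after pulling back $R$; it observes that the resulting link diagram $D''$ represents a split link, hence has pairwise linking numbers zero, hence (Lemma~\ref{ld-lk}) even warping degrees $w(D''_i,D''_j)$, and then invokes Lemma~\ref{pair-cc} to find further region sets $R_{ij}$ whose RCCs kill all warping crossing points in pairs. Pulling back the symmetric difference of $R$ and all $R_{ij}$ to $D$ then yields a diagram of $S$ with every inter-component warping degree equal to zero, which represents a completely split spatial graph on the nose. This stays entirely at the diagram level and never needs a 3-dimensional isotopy argument.

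Your alternative---pull back only $R$, then argue geometrically that because $L_{\mathrm{new}}$ is split so is $S_{\mathrm{new}}$---is plausible but, as written, has a gap in the last step. After the ambient isotopy placing the $K_i$ in disjoint balls $B_i$, you propose to re-merge each pair of split points ``by joining the two points by an arc within $B_i$ and contracting.'' The spatial graph you obtain this way depends on the isotopy class (rel endpoints, in the complement of $K_i$) of the chosen arc, and an arbitrary arc in $B_i$ need not reproduce $S_{\mathrm{new}}$: it can be knotted or linked with $K_i$ inside $B_i$, yielding a different spatial embedding of the same abstract graph. Conversely, if you transport the \emph{original} merging arcs (those coming from the relationship between $D_{\mathrm{new}}$ and $D'_{\mathrm{new}}$) through the ambient isotopy, there is no reason their images lie inside $B_i$. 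The statement you need---that a splitting $2$-sphere for $L_{\mathrm{new}}$ can be isotoped in $S^3\setminus L_{\mathrm{new}}$ to avoid the small vertex-balls $B_v$, after which it also splits $S_{\mathrm{new}}$---is true, but it requires an innermost-disk argument that you have not supplied. The paper's use of Lemma~\ref{pair-cc} sidesteps this entirely.
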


\begin{proof}
(i) $\Rightarrow$ (iv): The contraposition holds by Corollary \ref{rcc-proper-g}. \\
(ii) $\Leftrightarrow$ (iii): By Theorem \ref{cheng-thm}. \\
(iii) $\Leftrightarrow$ (iv): Give an orientation to $D'$, and give the same orientation to each edge of $D$. 
Then the orientation of $D$ is Eulerian, and we can see that the properness is the same for $D$ and $D'$. 
Note that even if $D'$ has extra crossings created by the vertex splittings, there are no influences because they are self-crossings. \\
(ii) $\Rightarrow$ (i): Let $R$ be a set of regions of $D'$ such that $D'$ is splittable by region crossing changes at the regions in $R$, and let $D'' =D''_1 \cup D'' _2 \cup \dots \cup D'' _r$ be the resulting of region crossing changes at $R$. 
Since the linking number is zero for each pair of components, the value of the warping degree $w(D'' _i , D'' _j)$ is even by Lemma \ref{ld-lk}. 
By Lemma \ref{pair-cc}, we can realize pairwise crossing changes at all the warping crossing points between $D'' _i$ and $D'' _j$ by region crossing changes at some regions, say $R_{i j}$. 
Apply region crossing changes to $D$ at the corresponding regions to the symmetric difference of $R$ and $R_{i j}$ for all $i < j$. 
Then we obtain a diagram with warping degree zero for any pair of components. 
Hence, $D$ is also completely splittable by region crossing changes. 
\end{proof}

\section{Proof of the main theorems}

In this section, we prove Theorems \ref{thm-unknottable} and \ref{thm-splittable}. 
For non-Eulerian spatial graphs, we have the following theorem by Lemma \ref{lemma12}. 

\begin{theorem}
Every non-Eulerian spatial graph is completely splittable by region crossing changes. 
\label{nE-splittable}
\end{theorem}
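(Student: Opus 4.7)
The plan is to derive Theorem \ref{nE-splittable} as an essentially immediate consequence of Lemma \ref{lemma12}. Given a non-Eulerian spatial graph $S$, Lemma \ref{lemma12} tells me that whatever spatial graph can be reached from a fixed diagram $D$ of $S$ by ordinary crossing changes is also reachable from \emph{some} diagram $E$ of $S$ using only region crossing changes. So the task reduces to exhibiting, for some diagram $D$ of $S$, a subset of crossings whose crossing changes produce a diagram $D'$ that represents a completely splitted spatial graph.

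To produce such a $D'$, I would take any diagram $D = D_1 \cup D_2 \cup \dots \cup D_r$ of $S = S_1 \cup S_2 \cup \dots \cup S_r$, fix an arbitrary ordering of the components, and at every non-self-crossing between $D_i$ and $D_j$ with $i<j$ apply a crossing change if necessary so that $D_j$ passes over $D_i$. In the resulting diagram $D'$, each component lies uniformly above all components of lower index at every non-self-crossing, so by a standard height-pushing argument the components can be separated into disjoint balls; equivalently, $D'$ is isotopic to a diagram without non-self-crossings, i.e., $D'$ represents a completely splitted spatial graph.

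Feeding the pair $(D,D')$ into Lemma \ref{lemma12} yields a diagram $E$ of $S$ together with a sequence of region crossing changes carrying $E$ to a diagram that represents the same spatial graph as $D'$. Since that spatial graph is completely splitted, $E$ is a diagram of $S$ that can be completely splitted by region crossing changes, which is precisely what it means for $S$ to be completely splittable by region crossing changes.

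I do not anticipate a genuine obstacle in this step. The real content — transferring a set of classical crossing changes to a region crossing change realization on a modified diagram via the $cPv$ construction and the symmetric-difference region set $R^F$ — has already been absorbed into Lemma \ref{lemma12} (and its supporting Lemma \ref{lem-even-vertex}). What remains here is merely the routine observation that ordering the strands by height turns any diagram into a splittable one, together with the bookkeeping of citing Lemma \ref{lemma12} with the correct input.
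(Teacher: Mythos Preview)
Your proposal is correct and matches the paper's own argument essentially line for line: choose a diagram, change non-self-crossings so the components are stacked by index (the paper happens to put lower index on top, you put higher index on top), note this yields a completely splitted spatial graph, and invoke Lemma~\ref{lemma12}. There is no substantive difference between the two proofs.
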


\begin{proof}
Let $S=S_1 \cup S_2 \cup \dots \cup S_r$ be a non-Eulerian spatial graph of $r$ components. 
Let $D$ be a diagram of $S$. 
Take a set $C$ of all the non-self-crossings between $S_i$ and $S_j$ such that $S_j$ is over than $S_i$ for all $i<j$. 
By Lemma \ref{lemma12}, $D$ can be transformed into a suitable diagram to change all the crossings in $C$ by region crossing changes. 
\end{proof}

\noindent Similarly, we have the following theorem. 

\begin{theorem}
Every spatial graph of a non-Eulerian planar graph is unknottable by region crossing changes. 
\label{nE-unknottable}
\end{theorem}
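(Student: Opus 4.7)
The plan is to mirror the argument used for Theorem \ref{nE-splittable}, but with the target being an unknotted diagram rather than a completely split one. The main engine remains Lemma \ref{lemma12}, which converts an arbitrary pattern of classical crossing changes on a non-Eulerian spatial graph into a sequence of region crossing changes after a suitable modification of the diagram.

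Concretely, I would start with any diagram $D$ of the given spatial graph $S$. Since the underlying graph is planar, the introduction has already recorded that $S$ is unknottable by ordinary crossing changes: there exists a set $C$ of crossings of $D$ whose simultaneous over/under switch yields a diagram $D'$ of the unknotted spatial graph of the given planar graph. Because $S$ is non-Eulerian, Lemma \ref{lemma12} now supplies a diagram $E$ of $S$ together with a collection of regions such that region crossing changes at those regions transform $E$ into a diagram representing the same spatial graph as $D'$, namely the unknotted one. Hence $E$ is unknotted by region crossing changes, which is exactly what is required for $S$ to be unknottable by region crossing changes.

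I do not foresee a serious obstacle: once Lemma \ref{lemma12} is in hand and planarity of the underlying graph is used only to produce the unknotted target $D'$, the conclusion is immediate. The one point requiring care is that the hypothesis of Lemma \ref{lemma12}, namely that $S$ be non-Eulerian, matches the hypothesis of the theorem; this is guaranteed by the assumption that the underlying planar graph is non-Eulerian, so that any spatial embedding of it is also non-Eulerian in the sense defined in the introduction.
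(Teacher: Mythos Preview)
Your proposal is correct and follows essentially the same route as the paper: use planarity to obtain an unknotted diagram $D'$ from $D$ by classical crossing changes, then invoke Lemma~\ref{lemma12} (available since $S$ is non-Eulerian) to produce a diagram $E$ of $S$ that region crossing changes carry to something representing the same spatial graph as $D'$. The paper's proof is almost word-for-word the same, only more terse.
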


\begin{proof}
Let $D$ be a diagram of a spatial graph $S$ of a non-Eulerian planar graph. 
Since $S$ is an embedding of a planar graph, we can transform $D$ into an unknotted diagram by some crossing changes. 
By Lemma \ref{lemma12}, $D$ can be transformed into the appropriate diagram to realize such crossing changes by region crossing changes. 
\end{proof}

\noindent For Eulerian spatial graphs, the following theorem follows from Corollary \ref{rcc-proper-g} and Lemma \ref{lemma-four}. 

\begin{theorem}
An Eulerian spatial graph $S$ is completely splittable by region crossing changes if and only if $S$ is proper. 
\label{E-splittable}
\end{theorem}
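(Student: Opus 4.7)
The plan is to derive Theorem \ref{E-splittable} as an essentially immediate packaging of Lemma \ref{lemma-four}, using that properness is a genuine invariant of the Eulerian spatial graph (so it transfers unambiguously between $S$ and any diagram $D$ of $S$ via Lemma \ref{parity-lk-g}).

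For the ``only if'' direction I would start from a diagram $D$ of $S$ that witnesses complete splittability by region crossing changes, so that some sequence of region crossing changes transforms $D$ into a diagram without non-self-crossings. Such a terminal diagram is trivially proper, since every pairwise linking number between components is zero. Because properness is preserved under region crossing changes (Corollary \ref{rcc-proper-g}), tracing back along the sequence forces $D$, and hence $S$, to be proper. This is exactly the implication (i) $\Rightarrow$ (iv) of Lemma \ref{lemma-four}.

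For the ``if'' direction I would assume $S$ is proper and pick \emph{any} diagram $D$ of $S$ (we are free to choose it, since being ``completely splittable by region crossing changes'' only requires the existence of some suitable diagram). Then $D$ satisfies condition (iv) of Lemma \ref{lemma-four}, and the chain (iv) $\Rightarrow$ (iii) $\Rightarrow$ (ii) $\Rightarrow$ (i) proved there — which internally invokes Theorem \ref{cheng-thm} applied to the link $D'$ obtained by vertex splittings on each component of $D$, and then uses Lemma \ref{ld-lk} and Lemma \ref{pair-cc} to resolve the residual warping crossings between pairs of components — yields that $D$ itself can be completely splitted by region crossing changes. Hence $S$ is completely splittable by region crossing changes.

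Since Lemma \ref{lemma-four} has already absorbed the real work, I do not expect a substantive obstacle in the present theorem: it is essentially a bookkeeping step that converts a per-diagram equivalence into a statement about the spatial graph. The only point to be careful about is ensuring that the word ``proper'' is applied unambiguously to $S$ when passing to an arbitrary diagram, which is exactly what Lemma \ref{parity-lk-g} guarantees.
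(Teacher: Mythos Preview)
Your proof is correct and matches the paper's own argument: both directions are packaged out of Lemma \ref{lemma-four} together with Corollary \ref{rcc-proper-g}, the paper merely phrasing the ``only if'' direction as a contrapositive rather than tracing properness back along the sequence of region crossing changes. One small wording fix: region crossing changes never remove crossings, so the terminal diagram is not literally ``without non-self-crossings'' but rather represents a completely splitted spatial graph, whose pairwise linking numbers vanish by invariance --- after this adjustment the rest of your reasoning goes through verbatim.
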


\begin{proof}
Let $S$ be an Eulerian spatial graph. 
If $S$ is proper, then $S$ is completely splittable by region crossing changes by Lemma \ref{lemma-four}. 
If $S$ is not proper, any diagram of $S$ is not proper, and furthermore any diagram which is obtained from a diagram of $S$ by region crossing changes is not proper by Corollary \ref{rcc-proper-g}. 
Then $S$ is not completely splittable by region crossing changes by Lemma \ref{lemma-four}. 
\end{proof}

\noindent The following theorem also follows. 

\begin{theorem}
A spatial graph $S$ of an Eulerian planar graph is unknottable by region crossing changes if and only if $S$ is proper. 
\label{E-unknottable}
\end{theorem}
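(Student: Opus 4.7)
The plan is to combine two previously established results: Theorem \ref{E-splittable}, which says a proper Eulerian spatial graph is completely splittable by region crossing changes, and Lemma \ref{conn-pl-E}, which says every diagram of a connected Eulerian planar spatial graph is unknottable by region crossing changes.

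For the necessity direction, observe that an unknotted spatial graph of a planar graph admits a diagram with no crossings, so all pairwise linking numbers vanish and the spatial graph is proper. Hence if $S$ is unknottable by region crossing changes, then some diagram of $S$ is transformed via region crossing changes into a diagram of a proper spatial graph, and by Corollary \ref{rcc-proper-g} the properness is preserved under region crossing changes, so $S$ itself is proper.

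For the sufficiency direction, assume $S$ is Eulerian, planar, and proper. By Theorem \ref{E-splittable}, there exist a diagram $D$ of $S$ and a set $R$ of regions whose region crossing changes transform $D$ into a diagram $D^\ast$ representing a completely split spatial graph $S^\ast$. Each component of $S^\ast$ is an embedding of a connected Eulerian planar graph, so by Lemma \ref{conn-pl-E} each standalone component diagram $D^\ast_k$ admits a set $T_k$ of regions whose region crossing changes unknot it. The proposal is to apply to $D$ the region crossing changes corresponding to the symmetric difference of $R$ with $\bigcup_k T_k$; because region crossing changes do not modify the underlying $4$-valent planar structure of the diagram, any region of $D^\ast$ is a region of $D$. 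If the $T_k$ can be realized simultaneously inside $D^\ast$ without interference, the result will be split with each component unknotted, hence represent an unknotted spatial graph.

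The step I expect to require the most care is the transfer of each $T_k$ from the standalone spherical diagram $D^\ast_k$ into the full split diagram $D^\ast$. The ``outer'' region of $D^\ast_k$ is not literally a region of $D^\ast$, since it merges with the exteriors of the other components. To handle this, I would invoke the standard fact that on the $2$-sphere applying region crossing changes to \emph{all} regions of a connected diagram is trivial (every crossing is a corner of exactly four regions), so $T_k$ may be replaced by its complement inside the regions of $D^\ast_k$ if necessary, arranging that $T_k$ avoids the outer region. Once this is done, each region in $T_k$ lies strictly inside the splitting disk of the $k$-th component of $D^\ast$ and therefore touches only crossings of that component; the operations coming from different $T_k$ then commute and do not interfere, and combined with $R$ they yield the desired unknotting of $D$.
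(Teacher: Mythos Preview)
Your overall strategy matches the paper's: take the symmetric difference of a region set $R$ that completely splits the diagram with region sets $T_k$ that unknot each component separately (via Lemma~\ref{conn-pl-E}), and argue that the $T_k$ affect only crossings of the $k$-th component. Your necessity argument is correct and is the same as the paper's.

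The gap is in your transfer of $T_k$ into the full diagram. You assume that because $D^{\ast}$ represents a completely split spatial graph, its components sit in disjoint planar ``splitting disks'', so that only the outer region of the standalone $D^{\ast}_k$ fails to be a literal region of $D^{\ast}$. But region crossing changes do not alter the underlying projection: $D^{\ast}$ has exactly the same regions and crossings as $D$, and its components may still be nested or interlaced in the plane even though the spatial graph is split. Hence \emph{any} region of the standalone $D^{\ast}_k$, not just the outer one, can be subdivided by other components, and your complement trick does not resolve the issue. The correct transfer---and this is what the paper does implicitly---is to replace each region $\rho$ of the standalone $D^{\ast}_k$ by the set of all regions of $D^{\ast}$ contained in $\rho$. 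A short parity check then shows the lifted $T_k$ leaves every crossing not lying on $D^{\ast}_k$ unchanged (such a crossing sees either all four or exactly two of its adjacent $D^{\ast}$-regions selected from a given $\rho$), while on self-crossings of $D^{\ast}_k$ it reproduces the standalone effect except possibly at reducible crossings; the paper explicitly remarks that this last discrepancy is harmless for unknottedness. With this corrected lifting your proof goes through, and indeed your choice to apply Lemma~\ref{conn-pl-E} to $D^{\ast}_k$ rather than to the original $D_k$ makes the interaction with the splitting set cleaner than in the paper's phrasing.
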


\begin{proof}
Let $D=D_1 \cup D_2 \cup \dots \cup D_r$ be a diagram of Eulerian planar graph $S$ of $r$ components. 
If $S$ is proper, then $D$ has a set $R_0$ of regions which makes $D$ completely splitted by region crossing changes by Lemma \ref{lemma-four}. 
Also, $D$ has a set $R_i$ of regions which makes $D_i$ unknotted by region crossing changes by Lemma \ref{conn-pl-E}. 
Hence, the symmetric difference $R$ of $R_0, R_1, R_2, \dots ,R_r$ makes $D$ unknotted by region crossing changes. 
We remark that some reducible crossings of $D_i$ may have different results of region crossing changes between $R_i$ and $R$, where a {\it reducible crossing} is a crossing such that the same region meets diagonally at the crossing. 
There is no problem in that case because the crossing informations at reducible crossings do not matter for unknottedness. 

If $S$ is not proper, then $S$ is not completely splittable and hence is not unknottable by region crossing changes. 
\end{proof}

\noindent We prove Theorems \ref{thm-unknottable} and \ref{thm-splittable}. 

\phantom{x}
\noindent {\bf Proof of Theorem \ref{thm-splittable}} \ 
It follows from Theorems \ref{nE-splittable} and \ref{E-splittable}. 
\hfill$\Box$

\phantom{x}
\noindent {\bf Proof of Theorem \ref{thm-unknottable}} \ 
It follows from Theorems \ref{nE-unknottable} and \ref{E-unknottable}. 
\hfill$\Box$

\section*{Acknowledgment}
The authors are very grateful to Ryo Nikkuni for helpful comments. 
The second author's work was partially supported by JSPS KAKENHI Grant Number 17K14239.


\begin{thebibliography}{99}
\bibitem{adams} C.~C.~Adams, The Knot Book: An Elementary Introduction to the Mathematical Theory of Knots, New York, W. H. Freeman, 1994. 
\bibitem{aida} H.~Aida, Unknotting operation for polygonal type, Tokyo J. Math. {\bf 15} (1992), 111--121. 
\bibitem{cheng} Z.~Cheng, When is region crossing change an unknotting operation?, Math. Proc. Cambridge Philos. Soc., {\bf 155} (2013), 257--269. 
\bibitem{CG} Z.~Cheng and H.~Gao, On region crossing change and incidence matrix, Sci. China Math. {\bf 55} (2012), 1487--1495. 
\bibitem{DR} O.~Dasbach and H.~Russell, Equivalence of edge bicolored graphs on surfaces, Electron. J. Combin. {\bf 25} (2018), Paper 1.59, 15 pp. 
\bibitem{SGT} E.~Flapan, T.~Mattman, B.~Mellor, R.~Naimi and R.~Nikkuni, Recent developments in spatial graph theory, Knots, links, spatial graphs, and algebraic invariants, Contemp. Math. {\bf 689} (2017), 81--102. 
\bibitem{HSS} K.~Hayano, A.~Shimizu and R.~Shinjo, Region crossing change on spatial-graph diagrams, J. Knot Theory Ramifications {\bf 24} (2015), 1550045. 
\bibitem{kauffman} L.~H.~Kauffman, Invariants of graphs in three-space, Trans. Amer. Math. Soc. {\bf 311} (1989), 697--710. 
\bibitem{kawauchi} A.~Kawauchi, Lectures on Knot Theory (in Japanese), Kyoritsu shuppan, 2007. 
\bibitem{MN} H.~Murakami and Y.~Nakanishi, On a certain move generating link-homology, Math. Ann. {\bf 284} (1989), 75--89. 
\bibitem{MT} T.~Motohashi and K.~Taniyama, Delta unknotting operation and vertex homotopy of spatial graphs, In: Proceedings of Knots '96 Tokyo, World Scientific Publ. Co.: 185--200. 
\bibitem{nakanishi} Y.~Nakanishi, Delta link homotopy for two component links, Topology Appl. {\bf 121} (2002), 169--182. 
\bibitem{nakanishi2} Y.~Nakanishi, Replacement of the Conway third identity, Tokyo J. Math. {\bf 14} (1991), 193--203. 
\bibitem{NO} Y.~Nakanishi and Y.~Ohyama, Delta link homotopy for two component links. III, J. Math. Soc. Japan {\bf 55} (2003), 641--654. 
\bibitem{ohyama} Y.~Ohyama, Local moves on a graph in $R^3$, J. Knot Theory Ramifications {\bf 5} (1996), 265--277. 
\bibitem{okada} M.~Okada, Delta-unknotting operation and the second coefficient of the Conway polynomial, J. Math. Soc. Japan {\bf 42} (1990), 713--717. 
\bibitem{ozawa-edge} M.~Ozawa, Edge number of knots and links, arXiv:0705.4348 (2007). 
\bibitem{shimizu-rcc} A.~Shimizu, Region crossing change is an unknotting operation, J. Math. Soc. Japan {\bf 66} (2014), 693--708. 
\bibitem{shimizu-wd} A.~Shimizu, The warping degree of a link diagram, Osaka J. Math. {\bf 48} (2011), 209--231. 
\bibitem{ST} A.~Shimizu and R.~Takahashi, Region crossing change on spatial theta-curves, J. Knot Theory Ramifications {\bf 29} (2020), 2050028. 
\bibitem{taniyama} K.~Taniyama, Homology classification of spatial embeddings of a graph, Topology Appl. {\bf 65} (1995), 205--228. 
\end{thebibliography}
\end{document}